\renewcommand{\k}{\kappa}
\renewcommand{\i}{\mathit{i}}
\renewcommand{\L}{\ell}
\renewcommand{\ss}{\mathsf {S}}
\newcommand{\intr}[1]{#1^\circ}
\newcommand{\clos}[1]{\overline{#1}}
\newcommand {\leng}[1] {\mathsf{Length}(#1)}
\def \conv {\mathsf{Conv}}
\newtheorem{thm}{Theorem}
\newtheorem{lm}{Lemma}
\newtheorem{qst}{Question}
\theoremstyle{definition}
\newtheorem{rem}{Remark}
\newtheorem{definition}{Definition}
\title{Non-smooth convex caustics for Birkhoff billiard.}
\author{Maxim Arnold\thanks{University of Texas at Dallas, Richardson, TX, 
USA} \and Misha Bialy\thanks{Tel Aviv University, Tel Aviv, Israel}.}
\begin{document}

	\maketitle

	\begin{abstract}
		This paper is devoted to the examination of the properties of the 
		string 
		construction for the Birkhoff billiard. Based on purely geometric 
		considerations, string construction is suited to provide a table for the 
		Birkhoff billiard, having the prescribed caustic. Exploiting this 
		framework together with the  properties of convex caustics, we give a 
	geometric proof of a result by Innami first proved in 2002 by means of 
	Aubry-Mather theory. In the second part of the paper
	we show that applying the string construction
	one can find a new collection of examples of $C^2$-smooth
	convex billiard tables with a non-smooth convex caustic.
\end{abstract}

	\section{Introduction.}
	Let $\Gamma$ be a  simple closed $C^1$-smooth convex curve in the 
	Eucledian plane.
	We consider Birkhoff billiard inside $\Gamma$. This simple dynamical 
	system creates many 
	geometric and dynamical questions and reflects many difficulties 
	appearing in general Hamiltonian systems. Reader may refer to any  
	textbook among the wide variety written on the subject (e.g.  
	\cite{Katok_Strelcyn}, \cite{Kozlov}, \cite{Forni_Mather}, 
	\cite{Tabachnikov}). 
	 
	In the present paper we will use the following  non-standard
	notations: the interior of the set bounded by  
	simple closed curve $\gamma$ will be denoted by  
	$\intr{\gamma}$,   
	while $\clos{\gamma}$ denotes the compact 
	$\intr{\gamma}\cup\gamma$.  Length of the curve is denoted by 
	$\leng{\gamma}$. Convex hull of $\gamma$ is denoted by 
	$\conv(\gamma)$. The 
	following definition of convex caustics is used in this paper:
\begin{definition} 
Simple closed curve  $\gamma\subset \intr{\Gamma}$ 
is called \emph{convex caustic} for $\Gamma$
if $\bar\gamma$ is a convex set and any 
supporting line to $\bar\gamma$ remains a supporting line to 
$\bar{\gamma}$ after 
the 
billiard reflection in $\Gamma$.  
\end{definition}
 Every convex caustic $\gamma$ corresponds  to 
the invariant curve 
$r_{\gamma}$ of the billiard ball map. Curve $r_\gamma\subset 
\mathbb{R}_+\times \mathbb{S}^1$ consists of all supporting lines to 
$\gamma$. This curve winds once around the phase cylinder and therefore 
is called rotational. We shall denote its rotation number by 
$\rho_{\gamma}$.

In the original Birkhoff paper \cite{Birkhoff} there was posed a conjecture 
that the existence of a continuous set of caustics, being very restrictive 
property, 
actually provide  an extreme rigidity on the shape of curve $\Gamma$. First 
result in this direction was achieved in \cite{Bialy}. 
Our paper is motivated by recent progress in the Birkhoff conjecture 
solution 
achieved in
\cite{Kaloshin_Avila,Kaloshin_Sorrentino}. The crucial assumption in these 
papers
consists in the existence of convex caustics such that the rotation numbers 
of the corresponding invariant curves form a rational sequence in the 
interval 
$(0;\tfrac 13]$, converging to $0$. It seems natural to compare such 
result 
with one proved by 
N. Innami \cite{Innami}.
\begin{thm}[Innami (2002), \cite{Innami}]\label{thm:Innami}
Assume that there exists a sequence of convex caustics $\gamma_n$ 
inside $\Gamma$ such that the rotation numbers $\rho_n$ of the 
corresponding invariant curves tend to $\frac{1}{2}$. Then $\Gamma$
is an ellipse.
\end{thm}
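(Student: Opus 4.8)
The plan is to combine the string construction with a degeneration argument: I will show that as $\rho_n\to\frac12$ the caustics $\gamma_n$ collapse onto a straight segment, and that the gardener's string construction around a segment produces exactly an ellipse whose foci are the two endpoints of that segment. First I would use the fact, established in the string-construction framework above, that a convex caustic determines its table by the string construction: for each $n$ the curve $\Gamma$ is built around $\clos{\gamma_n}$ with some string length $L_n$. Concretely, if the two support lines from a point $X\in\Gamma$ to $\clos{\gamma_n}$ touch $\gamma_n$ at $A_n(X),B_n(X)$ and $a_n(X)$ is the length of the near arc of $\gamma_n$ cut off between these tangency points, then
$$
|XA_n(X)|+|XB_n(X)|-a_n(X)=L_n-\leng{\gamma_n}=:\mu_n ,
$$
a quantity independent of $X$. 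This conserved string invariant is what I will pass to the limit.

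Second — and this is the heart of the matter — I would prove that $\rho_n\to\frac12$ forces $\clos{\gamma_n}$ to converge, in the Hausdorff metric, to a segment $I=[F_1,F_2]$ (possibly degenerate to a single point). The geometric mechanism is that rotation number $\tfrac12$ is the signature of $2$-periodic orbits, i.e. of double normals of $\Gamma$: an orbit inscribed in a caustic of rotation number near $\tfrac12$ almost retraces itself after one reflection, so consecutive chords are nearly reversed and the caustic, being their envelope, is pinched into a thin sliver except near its two turning directions. To make this rigorous I would argue by compactness: by the Blaschke selection theorem a subsequence of the (nested, by the twist property) caustics converges to a convex set $K\subseteq\intr{\Gamma}$; since the support-line/reflection property defining a convex caustic is closed under Hausdorff limits and the string invariant varies continuously, $K$ is again a convex caustic of $\Gamma$, now with rotation number exactly $\tfrac12$. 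The crux is then to show that such a caustic must be degenerate: heuristically its tangent chords are forced to be $2$-periodic billiard orbits, i.e. double normals of $\Gamma$, and I expect that the full reflection property of a convex caustic, applied to an entire $\mathbb{S}^1$-family of such chords, leaves no room for $K$ to have interior. A convex set with empty interior is a segment (or a point), giving $K=I$.

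Third I would pass to the limit in the string invariant. As $\clos{\gamma_n}\to I=[F_1,F_2]$ the tangency points $A_n(X),B_n(X)$ tend to the endpoints $F_1,F_2$ and the near arc $a_n(X)$ tends to $|F_1F_2|$, for every $X\in\Gamma$ off the line through $I$; since $\mu_n$ does not depend on $X$ it has a limit $\mu_\infty$, and the displayed identity becomes
$$
|XF_1|+|XF_2|=\mu_\infty+|F_1F_2|=\mathrm{const}\qquad(X\in\Gamma).
$$
Thus every point of $\Gamma$ has constant sum of distances to $F_1$ and $F_2$, so $\Gamma$ is an ellipse with foci $F_1,F_2$; in the degenerate case $F_1=F_2$ one obtains a circle, which is an ellipse as well.

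The principal obstacle is the middle step, specifically the incompatibility of rotation number exactly $\tfrac12$ with a caustic of positive area. The difficulty is genuinely global: the per-reflection advance of the footpoint is \emph{not} uniformly close to half the perimeter of $\Gamma$ (near the minor turning points the tangent chords fan out and the advance is larger), so the degeneration cannot be read off a single reflection and must be extracted from the rotation number together with convexity. Moreover it is essential to use that $K$ is a genuine caustic — that \emph{every} support line reflects to a support line — and not merely that it carries some $2$-periodic orbits: curves of constant width already show that a family of double normals by itself does not force roundness. It is precisely here that the properties of convex caustics developed above must be brought to bear.
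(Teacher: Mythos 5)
Your proposal follows the same overall architecture as the paper's proof: pass to a limit of the nested caustics, show the limit is again a convex caustic whose invariant curve has rotation number exactly $\tfrac12$, show that such a caustic must have empty interior (hence is a point or a segment), and conclude via the string construction around a segment, which produces an ellipse (this last step is the paper's Remark \ref{rem:empty}). However, the step you yourself call the principal obstacle is a genuine gap, not a technicality: you never prove that a convex caustic with rotation number $\tfrac12$ cannot have non-empty interior; you only write that you \emph{expect} the reflection property to leave no room for it. The paper closes exactly this gap with a short geometric estimate (Remark \ref{rem:different} followed by Lemma \ref{lm:ball}). First, no supporting line of a caustic with non-empty interior can reflect to itself: by continuity all supporting lines would then do so, making every tangent line of the caustic a diameter of $\Gamma$, so that two diameters would pass through each point of $\Gamma$, which is impossible. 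Hence at every reflection point $P$ the incoming and outgoing chords are the two \emph{distinct} supporting lines from $P$. Second, if $\intr{\gamma}$ contains a ball of radius $\delta$, that ball lies inside the cone spanned at $P$ by these two supporting lines, so the angle of that cone is bounded below by a positive constant depending only on $\delta$ and the size of $\Gamma$, uniformly in $P$. Each reflection therefore turns the chord direction by at most $\pi$ minus this fixed angle deficit, and averaging along an orbit bounds the rotation number strictly away from $\tfrac12$. Without this uniform deficit estimate your pinching heuristic does not close; as you note yourself, an abundance of (near-)double normals alone constrains nothing, so the quantitative estimate is indispensable.

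A secondary, fixable difference: where you invoke Blaschke selection together with the unproved claim that the caustic property is closed under Hausdorff limits, the paper takes $C=\bigcap_n\bar\gamma_n$ and verifies directly that $\partial_C$ is a caustic through the string construction: for the cap-bodies $K_n=\conv(P\cup\bar\gamma_n)$ and $K=\conv(P\cup C)$ one has $K=\bigcap_n K_n$ and $\leng{\partial_{K_n}}=S_n\to S=\leng{\partial_K}$, and since each $S_n$ is independent of $P\in\Gamma$, so is the limit $S$; Theorem \ref{thm:string} then certifies $\partial_C$ as a caustic with string parameter $S$. This replaces a soft closedness claim by a monotone-convergence computation. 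Note also that the nestedness you attribute to the twist property is proved in the paper (its Lemma 2) by an elementary argument: a common supporting line of two caustics would generate a common orbit of the two invariant curves, forcing equal rotation numbers. Your final step, passing to the limit in the string identity to obtain $|XF_1|+|XF_2|=\mathrm{const}$, is consistent with how the paper concludes, so once the angle-deficit lemma is supplied your argument would close along essentially the paper's lines.
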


Originally, Innami's arguments were based on the Aubry-Mather variational 
theory. In the next section we present a simple geometric proof using 
string construction. Yet, it remains a challenging question if one can prove 
more general 
statement relaxing the requirement of convexity of the caustics.

Let us remind the string construction framework. Given a convex 
compact set $\bar{\gamma}$  bounded by $\gamma$, and a number 
$S>\leng{\gamma}$ define the 
curve $\Gamma$  as a union of those points $P$ that the \emph{cap-body}
$\conv(P\cup \clos{\gamma})$
has the boundary of the length $S$.
Geometrically such a construction gives the set of all points traversed by 
the tip of non-elastic string of length $S>\leng{\gamma}$ wrapped 
around $\gamma$ and stretched to the very extent. Curve 
$\Gamma$ provided by such construction has $\gamma$ 
as its billiard caustic.  We shall refer to $S$ as a string parameter of the 
caustic. A closely related so-called Lazutkin parameter is defined as
$
L=S-\leng{\gamma}.
$

The string construction is widely known and can be easily proved to 
provide $\Gamma$ for smooth enough $\gamma$. In fact it remains valid 
also in more general case as it is stated in the following theorem.
\begin{thm}[Stall (1930), \cite{Stall}; Turner (1982),
\cite{turner}]\label{thm:string}$\phantom{1}$
\begin{enumerate}
	\item For a given compact convex  set $\bar\gamma$ and for every 
$S>\leng{\gamma}$ the string construction
determines a $C^1$-smooth convex closed curve $\Gamma$ such that 
$\gamma$ 
is a billiard caustic for $\Gamma$.

\item If   $\gamma$ is a convex billiard caustic for $C^1$ curve 
$\Gamma$ then $\Gamma$ can be obtained from $\gamma$ by  
the string construction for some $S$. 
\end{enumerate}
\end{thm}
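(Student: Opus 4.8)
The plan is to study the single scalar function underlying the whole construction, the perimeter of the cap-body. For $P\notin\clos{\gamma}$ set
\[
g(P)=\leng{\partial\,\conv(P\cup\clos{\gamma})},
\]
so that the string construction produces exactly the level set $\Gamma_S=\{P:g(P)=S\}$. The first step is a tractable global formula for $g$. Writing $u_\theta=(\cos\theta,\sin\theta)$ and letting $h(\theta)$ be the support function of $\clos{\gamma}$, Cauchy's perimeter formula gives $\leng{\gamma}=\int_0^{2\pi}h(\theta)\,d\theta$; since the support function of $\conv(P\cup\clos{\gamma})$ is $\max\!\big(h(\theta),\langle P,u_\theta\rangle\big)$, this yields
\[
g(P)=\int_0^{2\pi}\max\!\big(h(\theta),\langle P,u_\theta\rangle\big)\,d\theta.
\]
Each integrand is convex in $P$, so $g$ is convex; it is continuous, tends to $\leng{\gamma}$ as $P\to\clos{\gamma}$, increases strictly along outward rays, and grows at infinity. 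Hence $\{g\le S\}$ is a compact convex neighbourhood of $\clos{\gamma}$ for every $S>\leng{\gamma}$, and its boundary $\Gamma_S$ is a closed convex curve enclosing $\gamma$.

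Next I would establish smoothness and compute the gradient. For fixed $P\notin\clos{\gamma}$ the integrand fails to be differentiable in $P$ only for those directions $\theta$ whose supporting line to $\clos{\gamma}$ passes through $P$, a set of measure zero, so differentiation under the integral sign gives
\[
\nabla g(P)=\int_{\{\theta\,:\,\langle P,u_\theta\rangle>h(\theta)\}}u_\theta\,d\theta.
\]
Evaluating this over the illuminated arc of directions, or equivalently running the classical first-variation (envelope) computation directly on $g(P)=|PA|+|PB|+(\text{far arc})$, produces the clean identity $\nabla g(P)=\mathbf u_A+\mathbf u_B$, where $A,B$ are the two tangency points and $\mathbf u_A,\mathbf u_B$ are the unit vectors from $A,B$ toward $P$. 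This is a sum of two distinct unit vectors, hence nonzero, and it bisects the angle $\angle APB$. Being convex and everywhere differentiable on the exterior of $\clos{\gamma}$, $g$ is automatically $C^1$ there, and since $\nabla g\ne0$ the implicit function theorem makes $\Gamma_S$ a regular $C^1$ convex curve.

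The caustic equivalence then follows from the geometry of the gradient, in both directions. For part (1): at $P\in\Gamma_S$ the normal direction $\nabla g(P)=\mathbf u_A+\mathbf u_B$ bisects the angle between the supporting lines $PA$ and $PB$, so these lines make equal angles with $\Gamma_S$ and are exchanged by the reflection in $\Gamma_S$; as both support $\clos{\gamma}$, the curve $\gamma$ is a convex caustic. For the converse, part (2), I would run this backwards: if $\gamma$ is a convex caustic for a $C^1$ curve $\Gamma$, then at each $P\in\Gamma$ the two supporting lines through $P$ are interchanged by the reflection, so their bisector $\nabla g(P)$ is orthogonal to $T_P\Gamma$. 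The tangential derivative of $g$ along $\Gamma$ thus vanishes, and by connectedness $g\equiv S$ on $\Gamma$ for the single value $S=g(P_0)$; hence $\Gamma\subseteq\Gamma_S$, and as both are closed convex curves surrounding $\gamma$ they coincide, exhibiting $\Gamma$ as the string construction with parameter $S$.

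The main obstacle I expect lies not in the smooth model but in carrying every step through for a merely convex $\clos{\gamma}$, which may have corners and straight segments. Corners of $\gamma$ correspond to intervals of $\theta$ on which $h$ is nonsmooth, and flat pieces to intervals on which $h(\theta)=\langle P,u_\theta\rangle$ is active for a whole arc of $\theta$; there the tangency points $A(P),B(P)$ may jump or fail to be unique. The delicate point is to verify that these defects are \emph{not} inherited by $g$: the support-function representation keeps the exceptional set of directions null for every exterior $P$, so the gradient formula and the $C^1$-regularity of $\Gamma_S$ persist regardless of how rough $\gamma$ is. In part (2) the analogous care is needed to confirm that the reflection relation still forces $\nabla g(P)\perp T_P\Gamma$ when the supporting line at $P$ meets $\clos{\gamma}$ along a segment or at a vertex; this is precisely where the convexity of the caustic, rather than its smoothness, is being used.
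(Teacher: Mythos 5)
First, a point of reference: the paper does not prove this theorem at all --- it is quoted from Stall (1930) and Turner (1982) and used as a black box --- so there is no internal proof to compare against. Judged on its own merits, your argument is essentially correct and self-contained. The support-function identity $g(P)=\int_0^{2\pi}\max\bigl(h(\theta),\langle P,u_\theta\rangle\bigr)\,d\theta$, the convexity of $g$, differentiation under the integral sign (for each exterior $P$ the exceptional directions, those whose supporting line passes through $P$, form a finite set), and the resulting formula $\nabla g(P)=\mathbf{u}_A+\mathbf{u}_B$ all check out; so does the nonvanishing of the gradient (if $\mathbf{u}_A=-\mathbf{u}_B$ then $P$ would lie on the segment $AB\subset\clos{\gamma}$), and the fact that a differentiable convex function is automatically $C^1$. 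This is precisely what makes the proof valid for an arbitrary convex $\clos{\gamma}$, corners and flat pieces included, which is the case the paper actually needs in Section 3. Part (1) is complete as written.

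Part (2) has a genuine, though repairable, gap. From the caustic hypothesis you conclude that the two supporting lines through $P$ are interchanged by the billiard reflection, ``so their bisector $\nabla g(P)$ is orthogonal to $T_P\Gamma$.'' But two lines through $P$ have \emph{two} angle bisectors, perpendicular to each other, and reflection in either one interchanges the lines; the caustic condition alone therefore only tells you that $T_P\Gamma$ is one of the two bisectors, i.e.\ that $\mathbf{u}_A+\mathbf{u}_B$ is either normal \emph{or tangent} to $\Gamma$ at $P$. To exclude the tangential case, use that $\clos{\gamma}\subset\intr{\Gamma}$: the contact points $A,B$ lie strictly on the inner side of the tangent line to $\Gamma$ at $P$, hence both $\mathbf{u}_A$ and $\mathbf{u}_B$ have strictly positive component along the outward normal, and so does their sum, which therefore cannot be tangent. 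The same observation handles the degenerate alternative you do not address, in which a supporting line is reflected to \emph{itself} (then it must be tangent or normal to $\Gamma$; tangency is impossible for the same reason, and the normal case only occurs when the two supporting lines through $P$ coincide, where your gradient formula still gives the normal direction). With this one-sentence repair, the tangential derivative of $g$ along the $C^1$ curve $\Gamma$ vanishes, $g$ is constant on $\Gamma$, and your conclusion $\Gamma=\Gamma_S$ follows as stated, since a simple closed curve contained in a simple closed curve must equal it.
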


Let us emphasize that the string construction is highly non-explicit and 
difficult for calculations. Very important consequence of KAM theory, 
proved by Lazutkin \cite{Lazutkin, Lazutkin_book} and Douady 
\cite{Douady}, states the existence of convex caustics 
near the boundary  of sufficiently smooth (at least $C^6$) billiard table. On 
the other hand, applying string construction to the triangle, one 
gets 
billiard table which is piecewise $C^2$ with jumps of the 
curvature and hence by \cite{Hubacher} can not have caustics near the 
boundary. 

The scenario of destruction of caustics when one moves away 
from the boundary towards the interior could be understood in principle by 
the analogy with wave front propagation inside a convex curve 
(\cite{Forni_Mather}). Take for 
example the ellipse and consider the wave fronts as on the 
famous picture 
(\cite[Fig.36]{Arnold}).
For small distances the fronts remain smooth, but starting from some 
critical 
value they start to develop singularities. However, nobody saw such a 
bifurcation in 
practice for caustics of convex billiards due to the lack of integrable 
examples. On the other hand, non-convex caustics exist for instance for 
convex bodies of constant width, and  were studied in 
\cite{Knill}.  

Motivated by the above discussion, the natural question about the 
existence  of non-smooth convex caustics arises. 
More generally, it is natural to study how irregular the convex caustic can 
be. In \cite{Fetter} a billiard table of class $C^2$ was constructed which has 
a 
caustic of regular hexagon.
In the present paper we were able to construct whole functional family of 
the examples of 
$C^2$ billiard tables having non-smooth  convex caustics.
\begin{thm}
 There exist a one-parametric family of strictly convex non-smooth 
 compact sets $\bar{\gamma}$ and  
 the values of the string parameter $S$  
 such that the curves $\Gamma$ obtained by the string construction 
 are $C^2$-smooth. 
\end{thm}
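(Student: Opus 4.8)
The plan is to upgrade the $C^1$ regularity provided by Theorem~\ref{thm:string} to $C^2$ by controlling the radius of curvature of $\Gamma$. Since the string construction already yields a $C^1$ convex curve for any convex compact $\clos{\gamma}$ and any $S>\leng{\gamma}$, it suffices to prove that the radius of curvature $R$ of $\Gamma$ is a continuous function along $\Gamma$, continuity of the curvature being equivalent to $\Gamma$ being $C^2$. On every sub-arc of $\Gamma$ swept out while both tangency points of the string lie on smooth pieces of $\gamma$, the curve $\Gamma$ depends smoothly on the data and is automatically $C^2$. Hence the only points that require attention are the finitely many $P^\ast\in\Gamma$ at which one of the two tangent rays from $P^\ast$ to $\clos{\gamma}$ passes through a corner of $\gamma$.

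The main tool is an explicit formula for $R$ that does not see the curvature of the caustic. Write $\Gamma=\{P:\ f(P)=S\}$, where $f(P)$ is the perimeter of the cap-body $\conv(P\cup\clos{\gamma})$, so that $f(P)=t_1+t_2+\ell(Q_1,Q_2)$, with $Q_1,Q_2$ the tangency points, $t_i=|PQ_i|$, and $\ell(Q_1,Q_2)$ the wrapped arc-length. Because the string is taut, the derivatives of $f$ with respect to the tangency points vanish (this is exactly the tangency condition), so $\nabla f=n_1+n_2$, where $n_i=(P-Q_i)/t_i$ is the unit vector along the $i$-th ray; thus $\nabla f$ points along the interior bisector, recovering the reflection law. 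Differentiating once more and using that the motion of $Q_i$ along $\gamma$ is tangential, the terms carrying the caustic's radius of curvature cancel, and one obtains $\mathrm{Hess}\,f=\tfrac{1}{t_1}N_1N_1^{\top}+\tfrac{1}{t_2}N_2N_2^{\top}$, with $N_i$ the unit normal to $\gamma$ at $Q_i$. Feeding this into the level-set curvature formula gives the mirror relation
\[
\frac{1}{t_1}+\frac{1}{t_2}=\frac{2}{R\cos\psi}, \qquad R=\frac{2\,t_1t_2}{(t_1+t_2)\cos\psi},
\]
where $\psi$ is the common angle the two rays make with the normal to $\Gamma$. The essential feature is that $R$ is determined by $t_1,t_2,\psi$ alone.

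It then remains to check that $t_1,t_2$ and $\psi$ are continuous across each $P^\ast$, and here the strict convexity of $\gamma$ is decisive. At a corner $V$ the supporting directions fill a nondegenerate cone, so as $P$ moves the relevant tangency point slides along the incoming smooth arc up to $V$, pivots at $V$ while the ray sweeps the whole cone, and finally slides onto the outgoing arc. Throughout this passage $t_1=|PV|$, the ray direction $n_1$, and hence $\psi$, vary continuously, precisely because $\gamma$ carries no straight segments; the re-derivation of the curvature formula in the pivoting regime (where $\partial Q_1/\partial P=0$) produces the same expression, so the one-sided limits of $R$ at $P^\ast$ coincide and $\Gamma$ is $C^2$. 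This is exactly the step at which a polygonal caustic fails: along a flat edge the length $t_1$ jumps by the edge length, forcing a curvature jump (as for the string construction on a triangle), whereas strict convexity removes the jump. I expect this pivoting analysis to be the main obstacle — verifying that $\mathrm{Hess}\,f$, and therefore $R$, extends continuously through the corner points.

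Finally I would produce the family. Take $\clos{\gamma}$ to be a finite union of strictly convex smooth arcs (for definiteness, circular or elliptic arcs) meeting at genuine corners, arranged with a symmetry group so that all corners are equivalent, and let one geometric parameter (an arc radius, or the corner angle) vary in an open interval; choose any admissible $S>\leng{\gamma}$. For each such datum the preceding steps show that the string curve $\Gamma$ is $C^2$ and strictly convex, while $\gamma$ is a non-smooth strictly convex caustic, yielding the required one-parameter family. Beyond the curvature analysis above, the remaining work is routine: checking that the constructed $\clos{\gamma}$ are genuinely strictly convex with non-vanishing exterior angles and that $S$ can be chosen admissibly.
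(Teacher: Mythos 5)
Your proposal is correct in substance, and it takes a genuinely different route from the paper. The paper never touches the curvature of $\Gamma$ directly: it parametrizes the two arcs of $\Gamma$ adjacent to the switching points by the arc-length parameter $s$ of the caustic, converts $C^n$-smoothness at those points into the symmetry conditions \eqref{eq:smooth} on the germ of $\gamma$ at the corner $A$, solves the $n=2$ conditions for the germ --- this is where \eqref{eq:C2germ} comes from --- and then completes the germ to a global convex curve by a Borel-type argument; the tuned string parameter (simultaneous switching of the two ends of the string) and the restriction $\alpha<\pi/8$ are what make that global step close up. Your argument instead goes straight at the curvature, and its two key points are both sound: the identity $\mathrm{Hess}\,f=t_1^{-1}N_1N_1^{\top}+t_2^{-1}N_2N_2^{\top}$ holds because $(I-n_in_i^{\top})\,\partial Q_i/\partial P=0$ (the tangency point moves along $\gamma$, i.e.\ parallel to $n_i$), and the same formula holds verbatim in the pivoting regime where $\partial Q_i/\partial P=0$. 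Consequently the defining functions of the two regimes that meet along a switching ray have equal one-sided $2$-jets there --- the caustic's curvature has dropped out --- so the one-sided curvatures of $\Gamma$ coincide, and the standard mean-value argument (a $C^1$, piecewise-$C^2$ curve whose one-sided curvatures agree at finitely many junctions is $C^2$) finishes the proof. Two details should be made explicit: you need the arcs of $\gamma$ to have curvature bounded away from zero up to and including the corners (your circular or elliptic arcs do) --- bare strict convexity is not enough for $Q_i(P)$ to be $C^1$ and for the regime pieces to be $C^2$ up to the switching points --- and you should note that each corner contributes only finitely many switching points.

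The comparison is instructive. Your method is more general: it needs no tuning of $S$ and no special germ, it applies to any caustic assembled from finitely many positively curved arcs meeting at genuine corners, and it isolates exactly what fails for polygons (along a flat edge the tangency point, hence $t_1$, jumps --- which is why a triangle fails and why Fetter's hexagon needs a special string length making two jumps cancel by symmetry, in agreement with the paper's discussion). The paper's method is explicit but imposes more than geometric smoothness: conditions \eqref{eq:smooth} are smoothness conditions for the particular parametrization of $\Gamma$ by the caustic's arc length. Indeed, writing $\Gamma''=v'T+v^{2}\kappa N$ with $v=|\Gamma'|$, at a switching point $T$ is orthogonal and $N$ parallel to the symmetry axis, so given the (automatic) $n=1$ condition, the $n=2$ condition in \eqref{eq:smooth} says exactly $v'(0)=0$ --- a condition on the speed of the parametrization, while the genuinely geometric part, the matching of the two one-sided curvatures, holds automatically by the reflection symmetry (and, in your non-symmetric setting, by the Hessian identity). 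This is why the paper must solve for $(\varphi_1,\varphi_2)$, whereas in your approach nothing needs to be solved; modulo the details flagged above, your argument actually proves a stronger statement than the one in the paper.
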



We will use the following geometric idea (we will use complex notations 
$x+iy$ for points $(x,y)$ in the 
plane).  Start with a curve $\gamma_0(t): [-1,1]\to 
\mathbb{C}$ such that $\gamma_0(-1)=A=-1-i$, 
$\gamma_0(1)=iA=1-i$ and $\gamma_0(t)$ is symmetric with respect 
to vertical axis (i.e. $i\gamma_0(-t)=\overline{i\gamma_0(t)}$ (see Fig. 
\ref{fig:table_intro}). Construct $\gamma$ 
as a concatenation of $\{ i^k\gamma_0\}_{k=0}^3$.
Parametrize $\gamma$ by the arc-length parameter $s$ and choose the 
initial point in such a way that $\gamma(0)=A$. We will denote the total 
length of $\gamma$ by $4\ss$. Then $\gamma(\ss)=iA$.

	\begin{figure}[!hbt]
	\centering
	\includegraphics[width=0.45\textwidth]{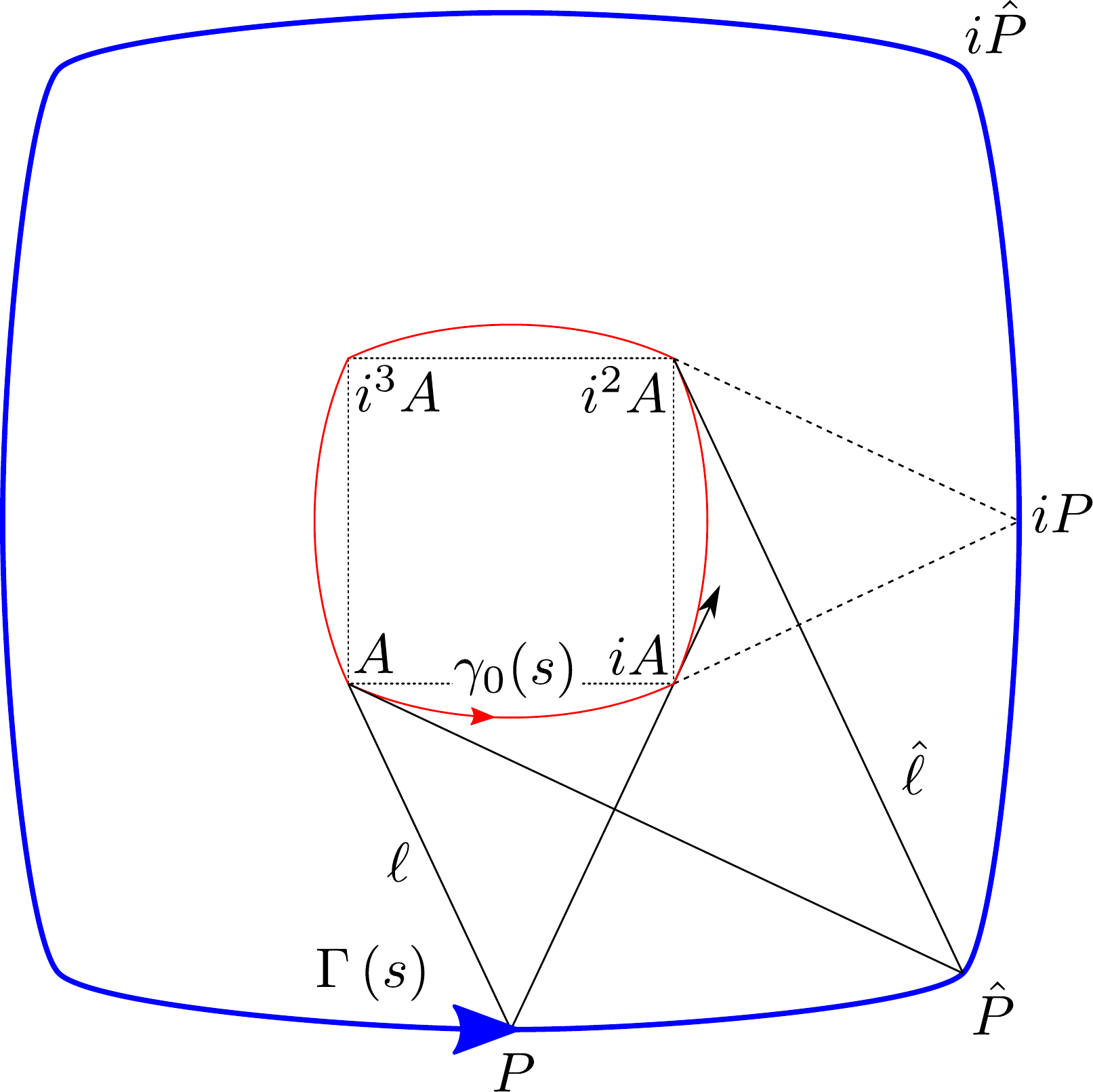}
	\caption{Switched caustic string construction.}\label{fig:table_intro}
\end{figure}

 Main idea is to 
choose the curve $\gamma$ and string parameter $S$ in such a way 
that the string 
construction will have the following properties: 

\begin{itemize}
	\item At the beginning (point $P$ on Fig.\ref{fig:table_intro}), left part 
	$AP$ of the string remains fixed at point $A$ while the right part of the 
	string unwind from the arc $(\widehat{iA, i^2A})$. 
	\item At the moment when the left part of the string became tangent to 
	$\gamma$ at the point $A$ (this corresponds to the point $\hat{P}$ on 
	$\Gamma$)  right part reaches the point $i^2A$ and remains fixed after 
	that. We will call this moment the \emph{switching of the first kind}.
	\item While the left part of the string winds around the arc 
	$(\widehat{A,iA})$ the right 
	part remains fixed at $i^2A$ (see Fig. \ref{fig:table_intro}) till the 
	moment 
	when the vertex of the string reaches the point $iP$. We will 
	call this \emph{switching of the second 
	kind}.
	\item $D_4$ symmetry provides the whole picture. 
	\end{itemize}

Let us reemphasize, that the string construction being non-explicit 
procedure, typically does not provide any analytic expression for the table 
$\Gamma$ from given $\gamma$. In the example \cite{Fetter}, the 
construction is made explicit by fixing two end-points on the string. 
Disadvantage of such situation is the complete loss of any flexibility, since 
the corresponding table may consist only of the elliptic arcs. In the current 
paper we propose another, more flexible yet explicit construction, fixing 
only one end-point of the string and allowing another point to slide along 
the given curve $\gamma$.  


\subsection*{Structure of the paper.} In the next section we will provide 
geometric arguments for the proof of the Theorem \ref{thm:Innami}.
Section 3 is devoted to the construction of the $C^2$ tables with 
non-smooth caustics.  In 
Section 4 we will pose some open questions arising in our considerations. 
\subsection*{Acknowledgments.} MB is thankfull to the participants of the course  "Billiards" given in Tel Aviv University for very usefull discussions and ideas.
MB was supported by ISF 162/15.
		\section{Geometric proof of Innami's result.\label{sec:Innami}} 
We will start with the following simple remarks.
\begin{rem}\label{rem:empty}
	If billiard in $\Gamma$ has a convex caustic $\gamma$ with 
	$\intr{\gamma}=\emptyset$ 
	then $\Gamma$ is either an ellipse or a 
	circle. 
\end{rem}
Indeed, condition $\intr{\gamma}=\emptyset$ for convex $\gamma$
means 
that $\gamma$ is either a 
point or a segment. The rest follows from the string construction.

	\begin{rem}\label{rem:different}
		If convex caustic $\gamma$ has
		non-empty interior, then every supporting line to $\bar\gamma$ 
		after 
		reflection in 
		$\Gamma$ at point $P$ becomes second 
		supporting line to $\bar\gamma$ from $P$. Recall that for any point 
		$P$ and for any convex 
		body $C$ there exist exactly two supporting lines to $C$ passing 
		through $P$.
	\end{rem}
 Assume that there exists a supporting line to $\bar\gamma$ which 
 is reflected to itself. Then, by 
 continuity, since $\gamma$ has 
 non-empty interior, all lines must behave like this. Therefore, all lines 
 tangent to $\bar{\gamma}$ are diameters, but 
 then for any 
 point $P\in\Gamma$
	there are two diameters passing through $P$ which is not possible.
	Finally, we get:
	\begin{lm}\label{lm:ball}
	Let	$\gamma$ be a convex caustic for $\Gamma$.
	Then  $\intr{\gamma}\ne\emptyset$  if and only if the rotation 
	number of the corresponding invariant curve is strictly less then 
	$\frac{1}{2}$.
	\end{lm}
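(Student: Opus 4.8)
The plan is to prove the biconditional by showing that the rotation number attains the value $\tfrac12$ precisely in the degenerate case $\intr{\gamma}=\emptyset$. First I would recall the standard fact that a convex caustic has rotation number $\rho_\gamma\in(0,\tfrac12]$, the maximal value $\tfrac12$ being the period-two (diameter) regime: since the orbit stays tangent to the convex set $\bar\gamma$ from outside, the tangency point winds monotonically around $\gamma$ and the reflection point cannot advance by more than half a turn of $\Gamma$ per step. Granting this bound, the lemma reduces to the equivalence $\rho_\gamma=\tfrac12\iff\intr{\gamma}=\emptyset$, so that $\intr{\gamma}\ne\emptyset$ becomes equivalent to $\rho_\gamma<\tfrac12$.

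For the implication $\intr{\gamma}\ne\emptyset\Rightarrow\rho_\gamma<\tfrac12$ I would argue by contradiction. The curve $r_\gamma$ is a rotational invariant curve, and the billiard ball map restricts to it as an orientation-preserving circle homeomorphism whose rotation number is $\rho_\gamma$. If $\rho_\gamma=\tfrac12$, then Poincar\'e's theory of rotation numbers for circle homeomorphisms produces a point of period two, i.e. a $2$-periodic billiard orbit lying on $r_\gamma$. Such an orbit is a chord meeting $\Gamma$ orthogonally at both endpoints (a double normal), and being a point of $r_\gamma$ it is a supporting line to $\bar\gamma$ that is reflected to itself. But the discussion preceding the lemma shows that when $\intr{\gamma}\ne\emptyset$, a single self-reflecting supporting line forces, by continuity, all supporting lines to be diameters, which is impossible. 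Hence $\rho_\gamma\ne\tfrac12$, and combined with $\rho_\gamma\le\tfrac12$ this gives $\rho_\gamma<\tfrac12$.

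For the converse $\intr{\gamma}=\emptyset\Rightarrow\rho_\gamma=\tfrac12$ (equivalently, by contraposition, $\rho_\gamma<\tfrac12\Rightarrow\intr{\gamma}\ne\emptyset$) I would invoke Remark \ref{rem:empty}: an empty interior forces $\gamma$ to be a point or a segment and $\Gamma$ to be a circle or an ellipse. In the circle case the caustic degenerates to the center and every supporting line is a diameter, hence a $2$-periodic orbit, so $\rho_\gamma=\tfrac12$. In the ellipse case $\gamma$ is the focal segment and the corresponding orbits pass alternately through the two foci; this is the classical separatrix family of rotation number $\tfrac12$. In either case $\rho_\gamma=\tfrac12$, which completes the equivalence.

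The step I expect to be the main obstacle is the rigorous link between the value $\rho_\gamma=\tfrac12$ and the existence of a self-reflecting supporting line: one must verify that the restricted billiard map is a genuine orientation-preserving homeomorphism of the circle $r_\gamma$ so that Poincar\'e's periodic-orbit theorem applies, and that a period-two point of this map corresponds exactly to a double-normal chord tangent to $\bar\gamma$. The auxiliary bound $\rho_\gamma\le\tfrac12$ likewise deserves a careful justification rather than an appeal to intuition, since the entire dichotomy between $\rho_\gamma<\tfrac12$ and $\rho_\gamma=\tfrac12$ rests on it.
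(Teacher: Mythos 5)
Your proposal is correct in substance, but it handles the key direction by a genuinely different route than the paper. For $\intr{\gamma}\ne\emptyset\Rightarrow\rho_\gamma<\tfrac12$, the paper argues directly and quantitatively: if $\intr{\gamma}$ contains a ball of radius $\delta$, then at every reflection the two supporting lines through the reflection point subtend an angle bounded below in terms of $\delta$ (the ``angle deficit''), so the average winding per bounce is bounded away from half a turn, and strict inequality follows with an explicit gap. You instead argue softly: granting the a priori bound $\rho_\gamma\le\tfrac12$, equality would give, by Poincar\'e's theorem applied to the billiard map restricted to the invariant circle $r_\gamma$, a period-two orbit on $r_\gamma$, i.e.\ a supporting line reflected into itself, which the continuity argument of Remark~\ref{rem:different} excludes when the interior is non-empty; your treatment of the empty-interior direction (via Remark~\ref{rem:empty}, the diameter, resp.\ focal segment, forcing rotation number $\tfrac12$) coincides with the paper's. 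The trade-off is this: the paper's single geometric estimate uses no dynamical machinery and never presupposes $\rho_\gamma\le\tfrac12$ --- it establishes it en route, in quantitative form; your argument imports Poincar\'e's periodic-orbit theorem and leans on the auxiliary bound, which you rightly flag as the weak point. That bound is not mere bookkeeping: the unoriented supporting lines of $\gamma$ in fact form \emph{two} rotational invariant curves, with rotation numbers $\rho$ and $1-\rho$, so either one takes $\rho_\gamma\le\tfrac12$ as a convention (making the claim vacuous), or one fixes an orientation and must then show that the supporting line advances by at most $\pi$ per reflection --- which is precisely the qualitative ($\delta=0$) version of the paper's deficit argument. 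So, once made fully rigorous, your proof contains the paper's geometric core anyway, plus the extra dynamical input; what it buys in exchange is that strictness of the inequality comes from topology (existence of a period-two orbit) rather than from an estimate, which some may find conceptually cleaner.
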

\begin{proof}
	\begin{figure}[hbt]
		\centering
		\includegraphics[width=0.4\textwidth]{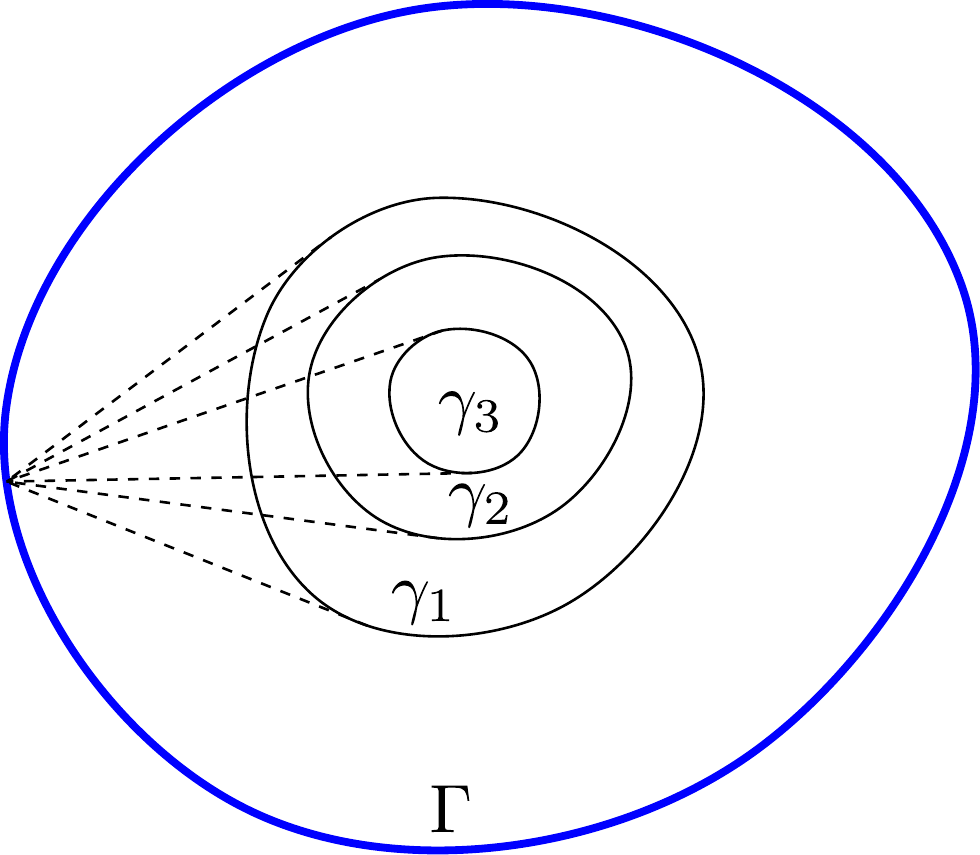}\qquad
		\includegraphics[width=0.4\textwidth]{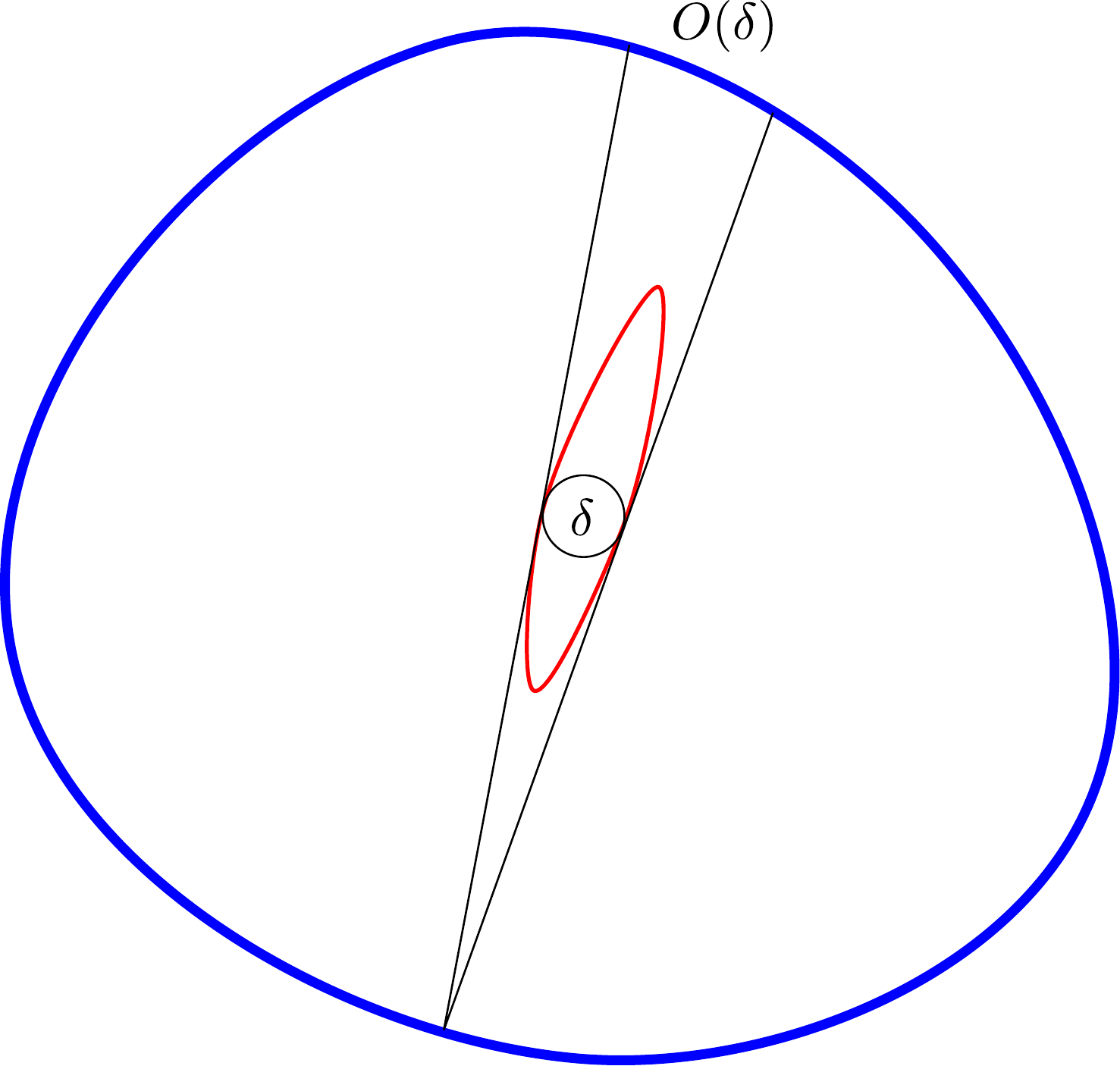}
		\caption{Left: Family of nested convex caustics with decreasing string 
			parameter. 
			Right: Rotation number $1/2$ could not correspond to a convex 
		caustic 
			with non empty interior.}\label{fig:deficit}
	\end{figure}
	If a convex caustic $\gamma$ has empty interior then, by Remark 
	\ref{rem:empty}, 
	$\Gamma$ is necessarily an ellipse and the invariant curve 
	corresponding to $\gamma$ has rotation number $\tfrac{1}{2}$ since 
	it contains a diameter.
	On the other hand, if $\intr{\gamma}$  contains some ball of radius 
	$\delta$, then every reflection 
	produces an angle deficit which can be bounded from below by   
	$\delta$ (see Fig.\ref{fig:deficit}).
	Therefore the average number of turns of the billiard trajectory tangent 
	to $\gamma$ is bounded away from 
	$\tfrac 12$. Hence the rotation number is strictly less than 
	$\tfrac 12$. 
\end{proof}	
	 
	Let $\gamma_n$ be a  sequence of convex caustics with the rotation 
	numbers of corresponding invariant curves $\rho_n\in (0;\tfrac{1}{2}]$. 
	By 
	Lemma \ref{lm:ball} we may assume that $\rho_n<\frac{1}{2}$ since 
	otherwise 
	$\gamma_n$ has empty interior and then must be an ellipse by Remark 
	\ref{rem:empty}. Passing 
	to a subsequence we can assume with no loss of generality that 
	$\rho_n$ is strictly increasing,  
	$\rho_n\nearrow \tfrac{1}{2}$.
	
	\begin{lm}
		Let $\gamma_1$ and $\gamma_2$ be two convex caustics for $\Gamma$. If the 
		corresponding invariant curves have rotation numbers 
		$\rho_1<\rho_2$, then
		$\bar\gamma_2\subset\intr{\gamma_1} $. 
	\end{lm}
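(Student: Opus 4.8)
The plan is to pass to the phase cylinder and exploit the ordering of the two rotational invariant curves, then translate that ordering back into a nesting statement about the convex bodies $\bar\gamma_1$ and $\bar\gamma_2$ in the plane. Recall that each caustic $\gamma_i$ determines the rotational invariant curve $r_{\gamma_i}\subset\mathbb{R}_+\times\mathbb{S}^1$ consisting of all supporting lines (chords of $\Gamma$) to $\bar\gamma_i$, and that the billiard ball map is an area-preserving monotone twist map. Since $\rho_1\neq\rho_2$, the curves $r_{\gamma_1}$ and $r_{\gamma_2}$ are distinct. Note also that $\rho_1<\tfrac12$, so by Lemma \ref{lm:ball} the outer candidate $\gamma_1$ has nonempty interior.

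First I would show that the two caustics are \emph{nested}, by proving that they admit no common tangent chord. The key geometric observation is that a line $\ell$ tangent to both $\bar\gamma_1$ and $\bar\gamma_2$, extended to a chord of $\Gamma$, is carried by the billiard reflection to a line $\ell'$ which is again tangent to both, because the caustic property holds for $\gamma_1$ and for $\gamma_2$ separately. Hence the whole billiard orbit of $\ell$ consists of common tangent chords, i.e. lies in $r_{\gamma_1}\cap r_{\gamma_2}$. Now two convex bodies that are not strictly nested possess a common tangent line, and two convex bodies admit only finitely many common tangents; so the orbit of $\ell$ would be a finite invariant set of chords. I would exclude this: a chord fixed by the billiard map meets $\Gamma$ orthogonally at both ends and thus reflects to itself, which is impossible for a supporting line to a caustic with nonempty interior by Remark \ref{rem:different}; and a period-two orbit made of two \emph{distinct} lines is impossible, since any $2$-periodic billiard orbit is a single diameter traversed in both directions. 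Handling all rotation numbers at once, the finiteness of $r_{\gamma_1}\cap r_{\gamma_2}$ is in any case incompatible with these being two distinct rotational invariant curves of a twist map, which by the Birkhoff ordering of invariant curves must be disjoint. Either way, $\bar\gamma_1$ and $\bar\gamma_2$ have no common tangent, so one lies in the interior of the other.

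It then remains to fix which inclusion holds, using the rotation numbers. Being disjoint graphs over the base circle, $r_{\gamma_1}$ and $r_{\gamma_2}$ are comparable: one lies everywhere above the other in the angular coordinate $\theta\in(0,\pi)$. The strictly inner caustic is precisely the one with the steeper tangent chords (larger $\theta$): from a point $P\in\Gamma$ the tangent lines to the smaller, more interior body cut across the table, while those to the larger body hug $\Gamma$ and are nearly grazing. Thus the inner caustic corresponds to the upper invariant curve, and by the monotonicity of the rotation number for monotone twist maps the upper curve carries the larger rotation number. Since $\rho_1<\rho_2$, the curve $r_{\gamma_2}$ is the upper one and $\gamma_2$ is the inner caustic, which gives $\bar\gamma_2\subset\intr{\gamma_1}$.

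I expect the nesting step to be the main obstacle: ruling out a common tangent chord cleanly needs either the finite-orbit argument together with the classification of short periodic orbits (transparent for crossing caustics, but one must also exclude two caustics sitting side by side, where up to four common tangents appear) or else an appeal to the Birkhoff ordering of invariant curves. The determination of the direction of the inclusion is comparatively routine once the monotonicity of the rotation number along the cylinder and the correspondence between nesting and the angular coordinate are in place; a useful consistency check is the concentric circular case, where $\rho=\theta/\pi$ and the caustic radius equals $R\cos\theta$, so that smaller caustics do carry larger rotation numbers.
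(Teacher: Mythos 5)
Your first step sets up exactly the right mechanism, which is also the paper's: if $\bar\gamma_2\not\subset\intr{\gamma_1}$, then either $\bar\gamma_1\subset\intr{\gamma_2}$, or the two bodies admit a common supporting line, and since the caustic property holds for each $\gamma_i$ separately, the \emph{entire} billiard orbit of that line consists of common supporting chords, i.e.\ lies in $r_{\gamma_1}\cap r_{\gamma_2}$. At this point the paper concludes in one line, and this is the idea your proposal is missing: the rotation number of a rotational invariant curve equals the rotation number of \emph{any single orbit} on it (the restriction of the billiard map to such a curve is a circle homeomorphism, and the translation number of an orbit, computed in the universal cover of the cylinder, does not depend on which invariant curve one regards the orbit as lying on). Hence one common orbit already forces $\rho_1=\rho_2$, contradicting $\rho_1<\rho_2$, with no information about the structure of that orbit required. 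Your detour around this point has genuine holes: (i) the claim that two convex bodies have only finitely many common tangents is false in general --- if the boundaries share an arc, every supporting line along that arc is a common tangent; (ii) even granting finiteness, disjoint bodies have four common tangent lines, hence up to eight phase points as oriented chords, and your classification of the resulting periodic orbits excludes only periods one and two, leaving periods three through eight untouched; (iii) your fallback, that ``two distinct rotational invariant curves of a twist map must be disjoint by the Birkhoff ordering,'' is not a true statement: curves with \emph{equal} rotation numbers can intersect. In the elliptic billiard itself, the chords through the two foci form two distinct rotational invariant graphs (the upper and lower separatrices) crossing at the phase points of the major axis, both with rotation number $\tfrac12$. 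The correct statement --- curves with \emph{different} rotation numbers are disjoint --- is essentially what you are trying to prove at this step, and its standard proof is precisely the shared-orbit argument you bypassed.

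The second half of your proposal, deciding the direction of the inclusion, is sound and matches the paper, which disposes of the case $\bar\gamma_1\subset\intr{\gamma_2}$ by the same monotonicity you invoke: the inner caustic corresponds to the upper invariant curve, hence $\rho_1\ge\rho_2$, already contradicting $\rho_1<\rho_2$ (only the non-strict inequality is needed, so the full strength of twist monotonicity is not required). So your proposal is repairable, but as written the nesting step --- which you yourself flag as the main obstacle --- does not go through; the one-line rotation-number argument above is what closes it.
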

	
\begin{proof} 
Assume that $\bar\gamma_2$ is not a subset of $\intr{\gamma_1} $. Then 
there are three possibilities: (1) 
$\bar{\gamma_1}\cap\bar{\gamma_2}=\emptyset$;  (2) 
$\gamma_1\cap\gamma_2\ne \emptyset$  or
	(3) $\bar\gamma_1\subset\intr{\gamma_2}$.

In the third case one obviously has $\rho_1\geq \rho_2$ contrary to the 
assumption of the Lemma. In the first and the second cases
there necessarily exists  
 a line supporting to both $\bar\gamma_1$ and 
$\bar\gamma_2$. Therefore, all billiard 
reflections in $\Gamma$ of this line are also supporting 
	lines for both  $\bar\gamma_1$ and 
	$\bar\gamma_2$. 
This means that there exist a whole infinite orbit lying in the intersection of 
the two invariant curves corresponding to $\gamma_1$ and $\gamma_2$. 
But then $\rho_1$ must be equal to $\rho_2$, since 
they are completely 
determined by 
one orbit. 

\end{proof}
\begin{rem}
	The statement of Lemma 1 holds true also in the opposite direction which will not be used below.
	Namely, $\bar\gamma_2\subset\intr{\gamma_1} $ implies 
	$\rho_1<\rho_2$. As we already mentioned in the proof it is obvious 
	that $\rho_1\leq\rho_2$. In addition $\rho_1$ can not be equal to 
	$\rho_2$. Indeed, otherwise there exist two disjoint graphs of $r_1$ and 
	$r_2$ with the same rotation number, invariant 
	under the billiard map of the cylinder, which is impossible since billiard 
	map is a twist map (see for example \cite[p.428]{HK}). 
		
		\end{rem}
Let $\{S_n\}$ be the sequence of string 
parameters corresponding to the caustics $ \gamma_n$. Then by 
Lemma 2,
$S_n$ is decreasing. Denote $S=\lim\limits_{n\to\infty} S_n$. 

	\begin{lm} Boundary of the intersection set 
	$$C=\bigcap\limits_{n=1}^{\infty} 
\bar\gamma_n$$
		is a convex caustic for $\Gamma$ with string parameter $S$. 
	\end{lm}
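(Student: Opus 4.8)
The plan is to establish three things in sequence: that $C$ is a nonempty compact convex set, that its boundary is a convex caustic, and that the associated string parameter is exactly $S=\lim_n S_n$. Since each $\bar\gamma_n$ is compact and, by Lemma 2, they are nested with $\bar\gamma_{n+1}\subset\intr{\gamma_n}$, the intersection $C$ is a decreasing intersection of nonempty compact sets and is therefore nonempty and compact. As an intersection of convex sets it is convex. Thus $C=\clos{\partial C}$ is a convex compact set, and $\gamma_\infty:=\partial C$ is a simple closed convex curve (possibly degenerating to a point or segment, a case we can handle separately via Remark \ref{rem:empty}).

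The heart of the argument is the caustic property: I would show that every supporting line to $C$ is taken by the billiard reflection in $\Gamma$ to another supporting line of $C$. My approach is a limiting one. Fix a supporting line $\ell$ to $C$ at a point $Q\in\partial C$. For each $n$ one can approximate $\ell$ by a supporting line $\ell_n$ to $\bar\gamma_n$ (choosing, for instance, the supporting line to $\bar\gamma_n$ with the same outward normal direction as $\ell$); since $\bar\gamma_n\to C$ in the Hausdorff metric, the lines $\ell_n$ converge to $\ell$. By the caustic property of each $\gamma_n$, the reflection $R(\ell_n)$ of $\ell_n$ in $\Gamma$ is again supporting to $\bar\gamma_n$. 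The billiard reflection map $R$ acting on the space of oriented lines is continuous (it is built from the reflection in the $C^1$ curve $\Gamma$), so $R(\ell_n)\to R(\ell)$. Passing to the limit, $R(\ell)$ is a line that is a limit of lines supporting the shrinking bodies $\bar\gamma_n$, and one checks this limit supports $C$: any supporting line of $\bar\gamma_n$ keeps $C\subset\bar\gamma_n$ on one side, so the limit keeps $C$ on one side and touches it. This yields that $R(\ell)$ supports $C$, which is exactly the caustic property.

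For the string parameter, I would use Theorem \ref{thm:string}(2): since $\gamma_\infty$ is a convex caustic for the $C^1$ curve $\Gamma$, there exists a string parameter $S'$ with $\Gamma$ obtained from $\gamma_\infty$ by the string construction. To identify $S'=S$, recall that the string parameter equals the perimeter of the cap-body $\conv(P\cup C)$ for any $P\in\Gamma$, a quantity continuous in the caustic in the Hausdorff topology. Fixing a point $P\in\Gamma$, the string parameter for $\gamma_n$ is the perimeter of $\conv(P\cup\bar\gamma_n)$, which equals $S_n$; as $\bar\gamma_n\to C$ these perimeters converge to the perimeter of $\conv(P\cup C)$, which is $S'$. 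Hence $S'=\lim_n S_n=S$.

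The main obstacle I anticipate is the limiting step for the caustic property, specifically the interchange of the reflection operation with the Hausdorff limit and the verification that the foot-points of tangency converge appropriately. One must make sure that the reflection point of $\ell_n$ on $\Gamma$ converges to the reflection point of $\ell$, and that no supporting line degenerates in the limit (for example if $C$ has a corner, a single point $Q\in\partial C$ carries a whole fan of supporting lines, and one must check the reflection sends this fan consistently into the supporting fan at the reflected point). Handling corners of $C$ is precisely where the non-smoothness that the paper ultimately exploits could first appear, so this step deserves care; the continuity of $R$ on oriented lines together with the monotone nesting should nonetheless push the argument through.
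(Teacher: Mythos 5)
Your proposal is correct, but it takes a genuinely different route from the paper's proof at the key step. The paper never touches the billiard dynamics directly: it fixes $P\in\Gamma$, forms the cap-bodies $K_n=\conv(P\cup\bar\gamma_n)$ and $K=\conv(P\cup C)$, notes $K=\bigcap_n K_n$ and $\leng{\partial_{K_n}}=S_n\to\leng{\partial_K}$, and observes that since each $S_n$ is independent of $P$ (Theorem 1, part 2), the limit perimeter $S$ is also independent of $P$; hence $\Gamma$ is recovered from $C$ by the string construction with parameter $S$, and part 1 of Theorem 1 then delivers the caustic property and the parameter in one stroke. You instead prove the caustic property by hand --- approximating a supporting line of $C$ by supporting lines $\ell_n$ of $\bar\gamma_n$ with the same outer normal, using continuity of the reflection map on chords of the $C^1$ curve $\Gamma$, and passing to the limit --- and only afterwards invoke Theorem 1(2) plus the cap-body perimeter convergence to identify the parameter; so you end up running the paper's computation anyway, but only for the parameter identification. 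What your route buys is transparency and generality: it is the standard ``limits of invariant curves are invariant'' argument and does not need the string-construction characterization to establish invariance, so it would survive in settings where such a characterization is unavailable; what the paper's route buys is brevity, since both conclusions drop out of a single monotone-convergence argument with no continuity analysis of the reflection. Two small remarks on your write-up: the assertion that the limit line ``touches'' $C$ does need the compactness argument you omit (take tangency points $x_n\in R(\ell_n)\cap\bar\gamma_n$; by nestedness any limit point lies in every $\bar\gamma_m$, hence in $C$, and lies on $R(\ell)$), and your worry about corners of $C$ is unfounded --- each supporting line in the fan at a corner is approximated and reflected individually, and no consistency condition across the fan is ever required.
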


\begin{proof}
	The intersection set $C$ 
	is compact and convex. Moreover, it is easy to see that $\partial_C$ is 
	also a 
	caustic with string parameter $S$. Indeed, this follows from the 
	following geometric consideration (see Left part of Fig.\ref{fig:deficit}).  
	Fix a 
	point $P$ on 
	$\Gamma$ and consider the cap-bodies 
		$$K_n=\conv(P\cup \bar\gamma_n), \qquad K=\conv(P\cup C).$$
	Then obviously
	$$
	K_n \subseteq K,\qquad  K=\bigcap\limits_{n} K_n,
	$$
	and moreover
	$$
     \leng{\partial _{K_n}}=S_n\rightarrow 
     S=\leng{\partial_K}.
	$$
	In addition, since $\gamma_n$ is a caustic then $S_n$ does not depend on $P\in\Gamma$ (by Theorem 1). Therefore, $S$ also does not depend on $P$, and hence $C$ reconstructs 
	$\Gamma$ via 
string construction. Thus $\partial_C$ is a caustic by Theorem 1.
	\end{proof}
The last step in the proof of the Theorem 2 consists in the following 
Lemma.
\begin{lm}
	The limit caustic $\partial_C$ has empty interior.
\end{lm}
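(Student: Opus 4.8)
The plan is to argue by contradiction and to reduce the statement to the uniform angle-deficit estimate already used in the proof of Lemma~\ref{lm:ball}. Suppose, contrary to the claim, that $\intr{C}\neq\emptyset$. Since $C$ is compact and convex, its interior then contains a ball $B$ of some radius $\delta>0$. Because $C=\bigcap_n\bar\gamma_n$, we have $B\subseteq C\subseteq\bar\gamma_n$ for every $n$, so one and the same ball of radius $\delta$ sits inside all of the caustics $\gamma_n$ simultaneously.

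First I would record that the angle deficit produced at a billiard reflection is controlled from below by the visual angle subtended by the caustic from the reflection point. Concretely, for a chord tangent to a convex caustic meeting $\Gamma$ at $P$, the two support lines from $P$ make an angle at least as large as the visual angle of $B$ from $P$, which is bounded below by $2\arcsin\!\bigl(\delta/\mathrm{diam}\,\Gamma\bigr)$ since $P$ ranges over the compact curve $\Gamma$. As in the proof of Lemma~\ref{lm:ball}, this forces the per-reflection deficit from the straight (rotation number $\tfrac12$) regime to be bounded below by a constant $\eta=\eta(\delta,\Gamma)>0$ independent of $n$. Averaging over the orbit then yields $\rho_n\le\tfrac12-\eta$ for every $n$, which contradicts $\rho_n\nearrow\tfrac12$. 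Hence $\intr{C}=\emptyset$. Combined with Remark~\ref{rem:empty}, the limit caustic $\partial_C$ being a point or a segment forces $\Gamma$ to be an ellipse, which is exactly the conclusion of Theorem~\ref{thm:Innami}.

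The step I expect to be the main obstacle is justifying that the lower bound $\eta$ on the angle deficit is genuinely uniform in $n$, rather than deteriorating as the caustics shrink towards $C$. The point that makes this work is that the bound depends only on the fixed inscribed ball $B$ and the fixed table $\Gamma$: enlarging the caustic from $B$ to $\gamma_n$ only increases the visual angle, and the distance from any reflection point to $B$ is bounded by $\mathrm{diam}\,\Gamma$. I would deliberately avoid invoking the converse of the nesting lemma (Lemma~2), namely the monotonicity of the rotation number under nesting recorded in the following remark, keeping the argument self-contained and consistent with that remark's observation that the converse is not needed here.
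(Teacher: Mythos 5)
Your proof is correct, but it takes a genuinely different route from the paper's. The paper's own proof is soft: it invokes continuity of the invariant curves and of their rotation numbers to conclude that the invariant curve corresponding to $C$ (which is a caustic by the preceding lemma) has rotation number $\tfrac12$, and then applies Lemma~\ref{lm:ball} to $C$ itself. You instead argue by contradiction and make the angle-deficit estimate from the \emph{proof} of Lemma~\ref{lm:ball} quantitative and uniform in $n$: a fixed ball of radius $\delta$ inside $\intr{C}\subseteq\bar\gamma_n$ forces the wedge angle between the two support lines from any $P\in\Gamma$ to each $\bar\gamma_n$ to be at least $2\arcsin\bigl(\delta/\mathrm{diam}\,\Gamma\bigr)$, hence $\rho_n\le\tfrac12-\eta$ with $\eta$ independent of $n$, contradicting $\rho_n\nearrow\tfrac12$. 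Your justification of the uniformity is sound: the bound depends only on the fixed inscribed ball and the fixed table, and enlarging the convex set from the ball to $\bar\gamma_n$ only widens the wedge. What your route buys: it bypasses the continuity-of-rotation-numbers assertion, which the paper states without proof and which is the most delicate point of its argument, and it does not even need the preceding lemma that $\partial_C$ is a caustic --- only that $C=\bigcap_n\bar\gamma_n$ and that each $\gamma_n$ is a caustic with rotation number $\rho_n$. What the paper's route buys: brevity, since it reuses Lemma~\ref{lm:ball} as a black box once the limit invariant curve is known to have rotation number $\tfrac12$. One small point of precision: you cannot cite Lemma~\ref{lm:ball} as stated, since its conclusion is qualitative (``strictly less than $\tfrac12$'') with no uniform gap; what you correctly reuse is the deficit estimate inside its proof, and your write-up makes that distinction clear.
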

\begin{proof} First notice that it follows from continuity of invariant 
curves and their rotation numbers that the invariant curve corresponding to 
$C$ has rotation number $\tfrac12$. Then from Lemma 1 we conclude that 
$\partial_C$
	has empty interior.
\end{proof}

\newcommand{\f}{\varphi}
\section{Non-smooth caustic. \label{sec:nonsmooth}}
	
	\begin{figure}[!h]
		\centering
		\includegraphics[width=0.5\textwidth]{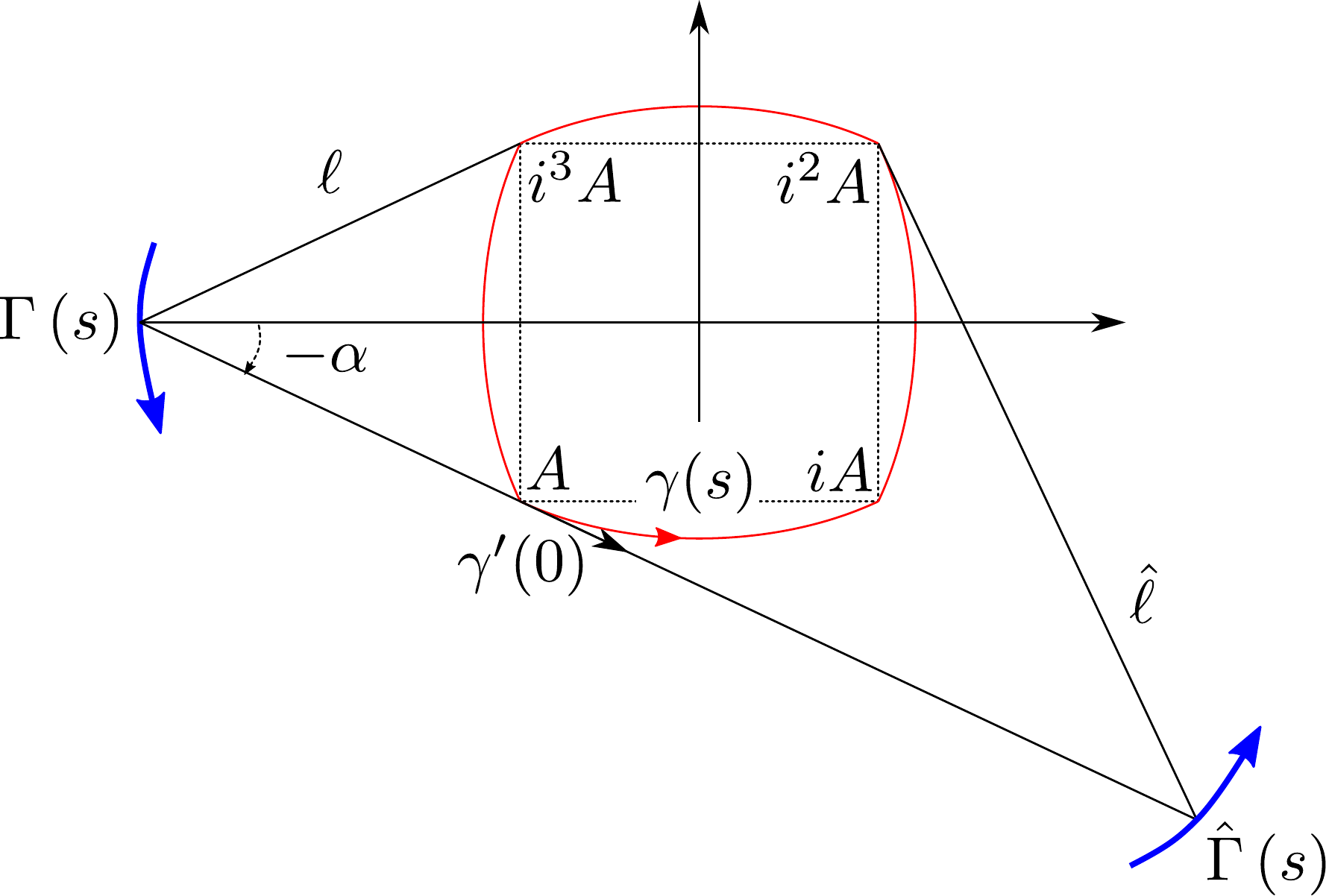}
		\caption{Switched caustic string construction. }\label{fig:table}
	\end{figure}
	Main idea of the proof of our result is to carefully choose the Lazutkin 
	parameter and the germ of function $\gamma$ at the point $A$. While 
	vertex of the string slides in the regime corresponding to the unwinding 
	from $\gamma(s)$, its trajectory corresponds to the smooth curve. Thus 
	we have to take care of the smoothness of $\Gamma$ near only two 
	points 
	corresponding to the  
	switching moments of the first and second kind respectively. We will 
	denote by $\Gamma(s)$ the part of $\Gamma$ corresponding to the 
	switching of the second kind about the point $A$. $\hat{\Gamma}$ will 
	denote the part of $\Gamma$ corresponding to the switching of the first 
	kind about the point $A$. Then the smoothness conditions read as 
	follows: all odd terms in the germs of $\Gamma$ and $\hat{\Gamma}$ 
	have to be orthogonal to the axis of 
	the symmetry while all the even terms has to be collinear with the axis of 
	symmetry.

 \paragraph*{Coordinate formulation.}
Parametrize the curve $\gamma$ by the arc-length parameter 
$s$, so that $|\gamma'(s)|~=~1$. Choose the initial point such 
that $\gamma(0)=A$. Denote by $\alpha$ the angle between $\gamma'(0) 
$ 
and horizontal axis. Then one easily obtains a parametrization for 
$\Gamma$ and $\hat{\Gamma}$ (see Fig. \ref{fig:table}):

\begin{equation}\label{eq:Gamma_parametrization}
\Gamma(s)
=\gamma(s)-t(s)\gamma'(s)
,\qquad\hat{\Gamma}(s)
=\gamma(s)+\hat{t}(s)\gamma'(s)
\end{equation}
where  $t(s)$ ant $\hat{t}(s)$ are some functions of $s$ denoting the length 
of the right part of the string near point $\Gamma(s)$ and left part of the 
string near 
point $\hat{\Gamma}(s)$ correspondingly. Functions $t$ and 
$\hat{t}$ can be found 
from the condition of the string to be unstretchable. We will denote $\i 
A=B$.

\begin{equation}\begin{aligned}
|\Gamma(s)+B|+|t\gamma'(s)|-s=2\L
\\
|\hat{\Gamma}(s)+A|+|\hat{t}\gamma'(s)|+s=2\hat{\L}
\end{aligned}
\end{equation}
where 
$\L=\frac{1}{\sin \alpha}$ and 
$\hat{\L}=\frac{\sqrt{2}}{\sin(\pi/4-\alpha)}$.  Simple computations yield 
for $t(s)$ and 
	$\hat{t}(s)$:
\begin{equation}\label{eq:tfromp}\begin{aligned}
t(s)=
\dfrac{p(s)}{p'(s)},\,\mbox{with }\, p(s)=\dfrac12 
\left((s+2\L)^2-|\gamma(s)+B|^2\right),\\
\hat{t}(s)=-\dfrac{\hat{p}(s)}{\hat{p}'(s)},\,\mbox{with }\,\hat{p}(s)=\dfrac 
12 
\left((s-2\hat{\L})^2-|\gamma(s)+A|^2\right).
\end{aligned}
\end{equation} 

Finally, introducing \eqref{eq:tfromp} into 
\eqref{eq:Gamma_parametrization} we get
\begin{equation}
\label{eq:Gamma_true}
\Gamma(s)=\gamma(s)-\frac{p(s)}{p'(s)}\gamma'(s)
,\qquad \hat{\Gamma}(s)=\gamma(s)-\frac{\hat{p}(s)}{\hat{p}'(s)}
\gamma'(s).
\end{equation}

Orient curve $\gamma$ as it is shown on Fig. \ref{fig:table}. We will use 
complex notation for the coordinates of the points. Then 
smoothness conditions for the $n$-th derivative of $\Gamma$ read 
\begin{equation}\label{eq:smooth}
\Re\left(i^{n-1}\Gamma^{(n)}(0)\right)=0,
\qquad\Re\left(i^{n-1}\hat{\Gamma}^{(n)}(0)\right)=
\Im\left(i^{n-1}\hat{\Gamma}^{(n)}(0)\right).
\end{equation}

For the curve $\gamma(s)$ we get the 
following parametrization: 

\begin{equation}
\label{eq:gamma_param}
\gamma(s)=A+\int\limits_{0}^{s}\exp\left\{i(\varphi(t)-\alpha)\right\}
dt,\qquad
\mbox{ where }\qquad
\varphi(t)=\sum\limits_{n=0}^\infty \f_n t^n\end{equation}

Thus $\f_0=0$,  and $\f_n$ corresponds to the $(n-1)$-st derivative of the 
curvature $\k$.

\begin{lm}
	Smoothness conditions \eqref{eq:smooth} for $n=1$ 
	are always satisfied.
\end{lm}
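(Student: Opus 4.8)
The plan is to reduce the two $n=1$ conditions in \eqref{eq:smooth} to statements about the single derivatives $\Gamma'(0)$ and $\hat\Gamma'(0)$ and then verify them by direct computation. Since $i^{n-1}=1$ for $n=1$, the conditions read $\Re\,\Gamma'(0)=0$ and $\Re\,\hat\Gamma'(0)=\Im\,\hat\Gamma'(0)$, i.e. the tangent to $\Gamma$ at $s=0$ is vertical while the tangent to $\hat\Gamma$ at $s=0$ points along $1+i$. Geometrically these are just the statements that each arc meets its axis of symmetry orthogonally, so I expect them to hold for \emph{every} admissible germ $\varphi$; the work is to see this fall out of \eqref{eq:Gamma_true}.

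First I would differentiate \eqref{eq:Gamma_true}. From \eqref{eq:gamma_param}, $\gamma'(s)=\exp\{i(\varphi(s)-\alpha)\}$, so $\gamma''=i\varphi'\gamma'$ and
\[
\Gamma'(0)=\gamma'(0)\bigl(1-t'(0)-i\varphi_1 t(0)\bigr),\qquad
\hat\Gamma'(0)=\gamma'(0)\bigl(1+\hat t'(0)+i\varphi_1\hat t(0)\bigr),
\]
where $t=p/p'$ and $\hat t=-\hat p/\hat p'$ as in \eqref{eq:tfromp}, $\gamma'(0)=e^{-i\alpha}$, and $\varphi_1=\varphi'(0)=\k(0)$. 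Thus everything reduces to the four numbers $t(0),t'(0),\hat t(0),\hat t'(0)$.

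Next I would evaluate $p,p',p''$ and their hatted analogues at $s=0$ directly from \eqref{eq:tfromp}, using $\gamma(0)=A$, $A+B=-2i$, $\gamma(0)+A=2A=-2-2i$, $\L=1/\sin\alpha$, and $\hat{\L}=\sqrt2/\sin(\pi/4-\alpha)=2/(\cos\alpha-\sin\alpha)$. A short computation gives the clean values $t(0)=\L$, $\hat t(0)=\hat{\L}$, together with $t'(0)=1-\varphi_1/\cos\alpha$ and $\hat t'(0)=-1+2\varphi_1/(\cos\alpha+\sin\alpha)$. Substituting back,
\[
\Gamma'(0)=\varphi_1 e^{-i\alpha}\Bigl(\tfrac{1}{\cos\alpha}-\tfrac{i}{\sin\alpha}\Bigr),\qquad
\hat\Gamma'(0)=2\varphi_1 e^{-i\alpha}\Bigl(\tfrac{1}{\cos\alpha+\sin\alpha}+\tfrac{i}{\cos\alpha-\sin\alpha}\Bigr).
\]

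The crux is a pair of factorizations that cancel the factor $e^{-i\alpha}=\gamma'(0)$: one has $\tfrac{1}{\cos\alpha}-\tfrac{i}{\sin\alpha}=-i\,e^{i\alpha}/(\sin\alpha\cos\alpha)$, and, setting $c=\cos\alpha-\sin\alpha$, $d=\cos\alpha+\sin\alpha$, the identity $c+id=(1+i)e^{i\alpha}$ gives $\tfrac1d+\tfrac ic=(1+i)e^{i\alpha}/\cos2\alpha$. With these,
\[
\Gamma'(0)=\frac{-i\varphi_1}{\sin\alpha\cos\alpha},\qquad
\hat\Gamma'(0)=\frac{2\varphi_1(1+i)}{\cos2\alpha},
\]
the first purely imaginary and the second a real multiple of $1+i$, so both conditions in \eqref{eq:smooth} hold for every $\varphi_1$ --- that is, independently of the germ --- which is the assertion. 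I expect the only genuine difficulty to be the bookkeeping: locating the anchor points and the constants $\L,\hat{\L}$ so that $t(0)=\L$ and $\hat t(0)=\hat{\L}$ come out cleanly, and spotting the two factorizations so that the $\alpha$-dependence cancels against $\gamma'(0)$.
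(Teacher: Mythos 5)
Your proof is correct and follows essentially the same route as the paper: both differentiate the explicit parametrization \eqref{eq:Gamma_true} at $s=0$, use $\gamma''=i\k\gamma'$ together with the values $A+B=-2i$, $\gamma(0)+A=2A$, $\L=1/\sin\alpha$, $\hat{\L}=2/(\cos\alpha-\sin\alpha)$, and observe that the resulting $\Gamma'(0)$ is purely imaginary while $\hat\Gamma'(0)$ is a real multiple of $1+i$, independently of the germ. The only difference is bookkeeping: the paper factors $\Gamma'=\tfrac{p}{(p')^2}(p''\gamma'-p'\gamma'')$ and reduces to the identity $\Im(A+B-2\L\gamma'(0))=0$, whereas you evaluate $t(0),t'(0),\hat t(0),\hat t'(0)$ explicitly and cancel $e^{-i\alpha}$ by hand --- the same computation in different packaging.
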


	The statement of this lemma  follows from the fact that any $C^0$ 
	caustic produces $C^1$ table via string construction. However, we 
	present more analytic proof of this result for a sake of completeness.

\begin{proof}$\phantom{1}$\begin{enumerate}
\item \textbf{Switching of the second kind.}
	From \eqref{eq:Gamma_true} we get
	\begin{equation*}
	\Gamma'=\left(1-\left(\frac{p}{p'}\right)'\right)\gamma'-
	\frac{p}{p'}\gamma''
	\end{equation*}
	therefore conditions \eqref{eq:smooth} read
	\[\Re(p'' \gamma'-p'\gamma'')=0\]
	We will denote $z_1\cdot z_2:=\frac{1}{2}\Re(z_1 \bar{z}_2)$. 
	Using 
	expressions \eqref{eq:tfromp} we get
	\begin{equation*}
	\label{eq:p'}
	p'=-(A+B)\cdot\gamma'+2\L
	,\qquad p''=-(A+B)
	\cdot \gamma''
	\end{equation*}
	 From \eqref{eq:gamma_param} it follows that 
	 $\gamma''=
	i\k \gamma'$ thus $p''\gamma'-p'\gamma''$ can be written as
	\[
	p''\gamma'-p'\gamma''=\frac 12\left(-\Re((A+B)
	\overline{i\k \gamma'})\gamma'+\Re((A+B)\overline{\gamma'})
	(i\k\gamma')-
	4\L i\k\gamma'\right)=i\k (A+B-2\L \gamma').
	\]
	 Thus 
	\[\Re(p''\gamma'-p'\gamma'')=\k \Im(A+B-2\L\gamma')\]
	The latter is identically zero since $\L 
	\gamma'(0)=\Gamma(0)-\gamma(0)$ 
	and so  
	$\Im(\L \gamma')=\Im (A)$ (see Fig. \ref{fig:table}).
	
	\item \textbf{Switching of the first kind.}
	 Similarly, smoothness conditions \eqref{eq:smooth}
reads	
	\[\Re (\hat{p}''\gamma'-\hat{p}'\gamma'')=
	\Im(\hat{p}''\gamma'-\hat{p}'\gamma'')\]
	where 
	\begin{equation*}
	\hat{p}'=-(2A)\cdot \gamma'-2\hat{\L}
	,\qquad\hat{p}''=-(2A)\cdot\gamma''
	\end{equation*}
	and so 
	\[\hat{p}''\gamma'-\hat{p}'\gamma''=\left( 
	\Re(Ai\k\overline{\gamma'})\gamma'+
	\Re(A\overline{\gamma'})(i\k\gamma')+
	2\hat{\L}i\k\gamma'\right)=2i\k (A +\hat{\L}\gamma')
	\]
	Real part of the right-hand side of the latter is always equal to the 
	imaginary part by the definition of $\hat{\L}$.
\end{enumerate}
\end{proof}

Two conditions \eqref{eq:smooth} for 
$n=2$ provide, via the computations similar to the above, two equations 
for parameters $\f_1$ and $\f_2$ with 
coefficients depending on $\alpha$. 

\begin{equation*}
\label{eq:n=2}
\begin{aligned}
&\dfrac{\f_1^2\sin\alpha 
-\f_1\sin\alpha\cos\alpha-\f_2\cos\alpha}{\sin\alpha \cos^2 \alpha} = 
0\\
&\dfrac{ \f_1 (\cos 2 \alpha + 2 ( \sin\alpha-\cos\alpha ) \f_1)  
-2 (\cos \alpha + \sin \alpha) \f_2}{(\cos \alpha- \sin \alpha) (1 + \sin 2 
\alpha)} = 0.
\end{aligned}
\end{equation*}
The latter system 
has 
a solution 
\begin{equation}
\label{eq:C2germ}\f_1=\frac{1}{2} \cos \alpha (1+\sin 2\alpha),\qquad 
\f_2= -\frac 18 
\cos^2 2 
\alpha \sin 2 \alpha,\end{equation}
which provides  a family of germs for $\gamma$, depending on 
parameter 
$\alpha$, guaranteeing the $C^2$-smoothness for the table  $\Gamma$.

Next we will need to construct the whole curve $\gamma$ providing the 
needed phenomenon in the string construction. Recall that our 
geometric idea was based on the construction of the curve 
$\gamma_0$ (see Fig.\ref{fig:table_intro}). Thus we need to present a 
convex curve of length $\ss$, starting at $A$ and ending at $iA$, having 
tangent slope $-\alpha$ at the left end and being symmetric with respect 
to the vertical axis.  We define $\gamma$ from $\f$ through 
\eqref{eq:gamma_param}. In order to finish the construction we have to 
prove the following theorem.
\begin{thm}
	There exists a strictly monotonically increasing function $\varphi(s)$ 
	satisfying 
	the following conditions
	\begin{enumerate}
		\item $\varphi(s)$ has the given germ \eqref{eq:C2germ} at $s=0$.
		\item $\varphi_0(\ss/2)=\alpha$ and $\f_{2n}(\ss/2)=0$ 
		for $n\geqslant 1$.
		\item \label{cond:integral} $\int\limits_0^{\ss/2} 
		\cos\varphi(s)ds=1$.
	\end{enumerate}
\end{thm}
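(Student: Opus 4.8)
The plan is to reduce the problem to a \emph{soft} interpolation on the half-arc $[0,\ss/2]$ and then close a single scalar equation by a continuity argument. First I would observe that condition (2) says exactly that $\varphi-\alpha$ is odd about the midpoint $s=\ss/2$: writing $g(\sigma)=\varphi(\ss/2+\sigma)-\alpha$, the vanishing of all even derivatives together with $\varphi(\ss/2)=\alpha$ amounts to $g(-\sigma)=-g(\sigma)$, which is precisely the reflection symmetry of $\gamma_0$ across the vertical axis and guarantees that the odd extension of $\varphi$ from $[0,\ss/2]$ to $[0,\ss]$ glues back $C^\infty$-smoothly at the midpoint (the non-smoothness of the caustic is created only at the junctions $A,\i A,\dots$, not here). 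Hence it suffices to build $\varphi$ on $[0,\ss/2]$, strictly increasing from $\varphi(0)=0$ to $\varphi(\ss/2)=\alpha$, matching the prescribed $2$-jet \eqref{eq:C2germ} at $0$, and then extend by the reflection $\varphi(\ss-s)=2\alpha-\varphi(s)$ followed by the $D_4$-symmetry. A cheap way to force condition (2) is to make $\varphi$ affine on a right neighbourhood $[\ss/2-\varepsilon,\ss/2]$, say $\varphi(s)=\alpha+c(s-\ss/2)$ with $c>0$, so that every derivative of order $\geq 2$ vanishes at $\ss/2$ automatically.

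Next I would manufacture a flexible family of admissible $\varphi$'s by gluing. Fix the explicit quadratic $\varphi_1 s+\varphi_2 s^2$ of \eqref{eq:C2germ} on a small left interval $[0,\delta]$ (note $\varphi_1=\tfrac{1}{2}\cos\alpha(1+\sin2\alpha)>0$ for $\alpha\in(0,\tfrac{\pi}{4})$, so the curvature $\k=\varphi'$ is positive there), keep the affine piece near $\ss/2$, and interpolate between them on the middle interval by a smooth monotone function built with a cutoff/partition of unity. Since the total turning required is only $\varphi(\ss/2)-\varphi(0)=\alpha>0$ while $\ss$ is still free, there is ample room to keep $\varphi'>0$ throughout: one may spend almost all of the turning near $0$ and let $\varphi$ creep up to $\alpha$ with arbitrarily small positive curvature over a long middle stretch. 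Letting the length of this low-curvature stretch (equivalently $\ss$ itself) be the free real parameter $\lambda$ yields a continuous one-parameter family $\varphi_\lambda$, each member satisfying conditions (1) and (2) and strict monotonicity.

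It then remains to arrange the integral condition (3), a single scalar equation. I would run an intermediate-value argument in $\lambda$: the map $\lambda\mapsto\int_0^{\ss(\lambda)/2}\cos\varphi_\lambda(s)\,ds$ is continuous, and because on the long creeping stretch $\varphi_\lambda\approx\alpha$ with $\cos\varphi\geq\cos\alpha>0$ (here one uses $\alpha<\tfrac{\pi}{4}$, so $\varphi\in[0,\alpha]$ keeps $\cos\varphi$ positive on all of $[0,\ss/2]$), this integral is comparable to $\tfrac{1}{2}\ss(\lambda)\cos\alpha$ and hence sweeps continuously from near $0$ (short arc) to $+\infty$ (long arc). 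Therefore it attains the value $1$ for some $\lambda$, which fixes $\ss$ and finishes the construction. The admissible range $\alpha\in(0,\tfrac{\pi}{4})$ — forcing $\varphi_1>0$, $\cos\varphi>0$, positivity of $\hat{\L}=\sqrt{2}/\sin(\pi/4-\alpha)$, and a genuine corner of exterior angle $\pi/2-2\alpha>0$ at each junction so that the caustic is non-smooth — is exactly what produces the one-parameter family promised by the theorem.

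The hard part will not be any single condition in isolation but their simultaneous reconciliation: the germ \eqref{eq:C2germ} rigidly pins $\varphi(0),\varphi'(0),\varphi''(0)$, condition (2) pins the right endpoint to all orders, strict monotonicity must hold on the whole interval, and (3) must equal exactly $1$. The delicate point is to organize the free parameter so that the family stays admissible (monotone, with fixed germ and odd right end) for \emph{every} $\lambda$ while the integral still ranges over an interval containing $1$; once the ``spreading out'' mechanism is set up carefully this collapses to a clean continuity step, but checking that the glued interpolation can always be kept strictly increasing — and that the fourfold concatenation closes into a strictly convex curve (total turning $8\alpha+4(\tfrac{\pi}{2}-2\alpha)=2\pi$) — is where the genuine bookkeeping lies.
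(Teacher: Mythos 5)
Your reduction of conditions (1)--(2) to a soft gluing problem on $[0,\ss/2]$ --- prescribed $2$-jet at $0$, oddness of $\varphi-\alpha$ at $\ss/2$ (your affine piece is a legitimate special case), strict monotonicity --- is fine and plays exactly the role of the Borel-theorem step in the paper's proof, and settling condition (3) by an intermediate-value argument is also the right general idea. The fatal flaw is that you treat $\ss$ as a free parameter (``$\ss$ is still free'', ``which fixes $\ss$''). It is not: in the construction of Section \ref{sec:nonsmooth} the quarter-length of $\gamma$ is forced by the switching scenario, $\ss=2\hat{\L}-2\L$ with $\L=\tfrac{1}{\sin\alpha}$ and $\hat{\L}=\tfrac{\sqrt{2}}{\sin(\pi/4-\alpha)}=\tfrac{2}{\cos\alpha-\sin\alpha}$, because $\L$ and $\hat{\L}$ are exactly the string data that make the switchings of the first and second kind occur at the corners, and these same constants were already used to derive the germ \eqref{eq:C2germ}. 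Once $\alpha$ is chosen, $\ss$ is a specific number; you cannot lengthen a ``low-curvature stretch'' to sweep the integral out to $+\infty$ without changing the length of $\gamma$, which destroys the switching pattern (and the consistency of \eqref{eq:C2germ}) that the whole theorem is about. What your argument proves is the weaker statement ``there exist \emph{some} length $\ss$ and some $\varphi$'', which is insufficient for the application.

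With $\ss$ fixed, the attainable values of $\int_0^{\ss/2}\cos\varphi\,ds$ over strictly monotone $\varphi$ running from $0$ to $\alpha$ lie strictly between $(\ss/2)\cos\alpha$ and $\ss/2$, so condition (3) is solvable precisely when $(\ss/2)\cos\alpha<1<\ss/2$; these are the inequalities \eqref{eq:phi1}, and verifying that they can hold is the actual content of the paper's proof. The first is equivalent to $\tan 2\alpha<1$, i.e.\ to the restriction \eqref{eq:alpha}, $\alpha<\pi/8$; the second ($\ss>2=|AB|$) reads $3\sin\alpha-\cos\alpha>\cos\alpha\sin\alpha-\sin^2\alpha$ and fails for small $\alpha$ (indeed $\ss=2(\hat{\L}-\L)\to-\infty$ as $\alpha\to 0$, so the scenario is empty there), while it holds at $\alpha=\pi/8$. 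Hence the admissible parameters form an open set of $\alpha$ near $\pi/8$ --- not the whole interval $(0,\pi/4)$ you claim, and no argument ignoring the relation $\ss=2\hat{\L}-2\L$ can detect this. To repair your proof, keep your glued family but run the intermediate-value argument in a shape parameter at fixed $\ss$ (the paper interpolates $\f=l\psi_-+(1-l)\psi_+$ between a function hugging $0$ and a function hugging $\alpha$), and then establish the two boundary inequalities above for a suitable range of $\alpha$.
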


\begin{proof}
	Thanks to  Borel theorem there exist a set $\Psi$ of $C^\infty$ 
	functions having given germs at $s=0$ and $s=\ss/2$.  Since for 
	$\alpha<\frac{\pi}{2}$ term $\f_1$ in \eqref{eq:C2germ} is positive, one 
	may assume without loss of generality that $\Psi$ consists of 
	strictly monotonically increasing functions. Therefore 
	the only condition which has to be satisfied is the
	condition \ref{cond:integral}.
	Taking small enough $\varepsilon$-step in $s$ we can assure
	$\psi(\varepsilon)<\tfrac{\alpha}{100}$ for 
	all $\psi\in \Psi$.  

	\begin{figure}[hbt]
	\centering
	\includegraphics[width=0.5\textwidth]{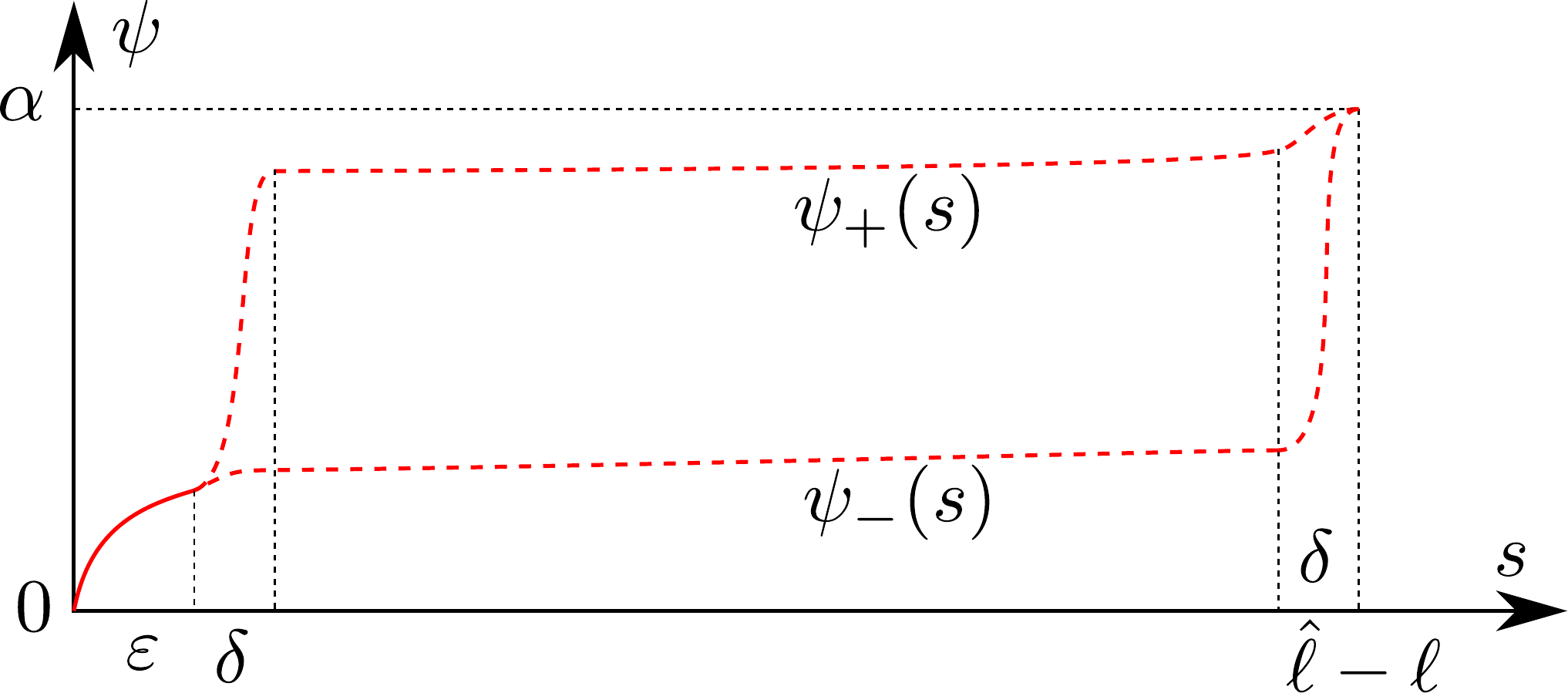}
	\caption{Construction of the solution.}\label{fig:angle}
\end{figure}
	Next we choose 
	two functions $\psi_-$ and $\psi_+$ from the set $\Psi$ as on Fig. 
	\ref{fig:angle}. That is $\psi_+(s)$ almost 
	equals to $\alpha$ for $s\in (\varepsilon+\delta,\ss/2-\delta)$ and 
	$\psi_-(s)$ is almost equal to $\psi(\varepsilon)$ for 
	$s\in(\varepsilon,\ss/2-\delta)$ for small enough $\delta$.  
	We will look for $\f$ as a convex 
	combination 
	$\f(s)=l\psi_-(s)+(1-l)\psi_+(s)$.
	Therefore $\f(s)$ obviously satisfies conditions 1 and 2. 
	 
	If we may choose $\psi_\pm$ in such a way that
	\begin{equation}
	\label{eq:phi1}
	(\ss/2)\cos \alpha<
	\int\limits_0^{\ss/2} \cos(\psi_-(s)-\alpha) ds<1
	\qquad \mbox{and}\qquad \ss/2>
	\int\limits_0^{\ss/2} 
	\cos(\psi_+(s)-\alpha) 
	ds>1
	\end{equation}
than there exists such 
	 $l$ that 
	$\int\limits_0^{\ss/2}\cos (\f(s))ds=1$, thus satisfying condition 3. 	
Hence it is sufficient to check that conditions \eqref{eq:phi1} can be 
satisfied 
	for an open set of parameters $\alpha$. Recall, 
	that by the construction $\ss=2\hat{\ell}-2\ell$.  From the first 
	inequality \eqref{eq:phi1}
	 we obtain, since 
	 $\alpha<\frac{\pi}{4}$,
	\[\hat{\L}-\L =\frac{2}{\cos\alpha-\sin\alpha}-\frac{1}{\sin\alpha}
	<\frac{1}{\cos\alpha}.\]
	
	This condition can be interpreted as follows: \emph{length of the curve 
		$\gamma$ could not exceed the sum of lengths of the segments of 
		two tangent lines from point $P$ to $\gamma$ (see Fig. 
		\ref{fig:table_intro})}. The latter 
		inequality is satisfied whenever
	$\tan 2\alpha<1$ or \begin{equation}
	\label{eq:alpha}
	\alpha<\frac{\pi}{8}
	\end{equation}
	
	Second condition in \eqref{eq:phi1} has the following geometric 
	interpretation: 
	\emph{length of 
	$\gamma$ could not be less than the distance between points $A$ and 
	$B$.} This yields:
	\[3\sin \alpha-\cos\alpha
	> \cos\alpha\sin\alpha-\sin^2\alpha\]
	Since the latter is satisfied for $\alpha=\frac{\pi}{8}$ we have found an 
	open set of $\alpha$ for which one could find appropriate functions 
	$\psi_-$ and $\psi_+$ shown on 
	Fig.\ref{fig:angle}. 
	
\end{proof}

\begin{rem}
	Since conditions \eqref{eq:smooth} provides two conditions on $\f_n$  
	to obtain $C^3$ of $\Gamma$ one gets four equations for $\f_1$, 
	$\f_2$, $\f_3$ and $\alpha$. Yet the number of parameters match the 
	number of equations, the corresponding value of $\alpha$ violates 
	inequality (9). Since inequality (9) arise from the construction based on 
	square symmetry, there is a hope that starting from other regular 
	polygons one can obtain inequality which can be satisfied. However, we 
	haven't found such examples. 
\end{rem}

\section{Open problems.}
Here we want to stress the general questions which are ultimately
related to the string construction.  Since the string construction is very 
implicit these questions turn out to be non-trivial.
\begin{qst}
	Is it possible to have two convex caustics $\gamma_1$ and $\gamma_2$
	of $\Gamma$ such that none of them is a subset of the interior of the 
	other?
	
\end{qst}

\begin{figure}[!h]
	\centering
	\includegraphics[width=0.45\textwidth]{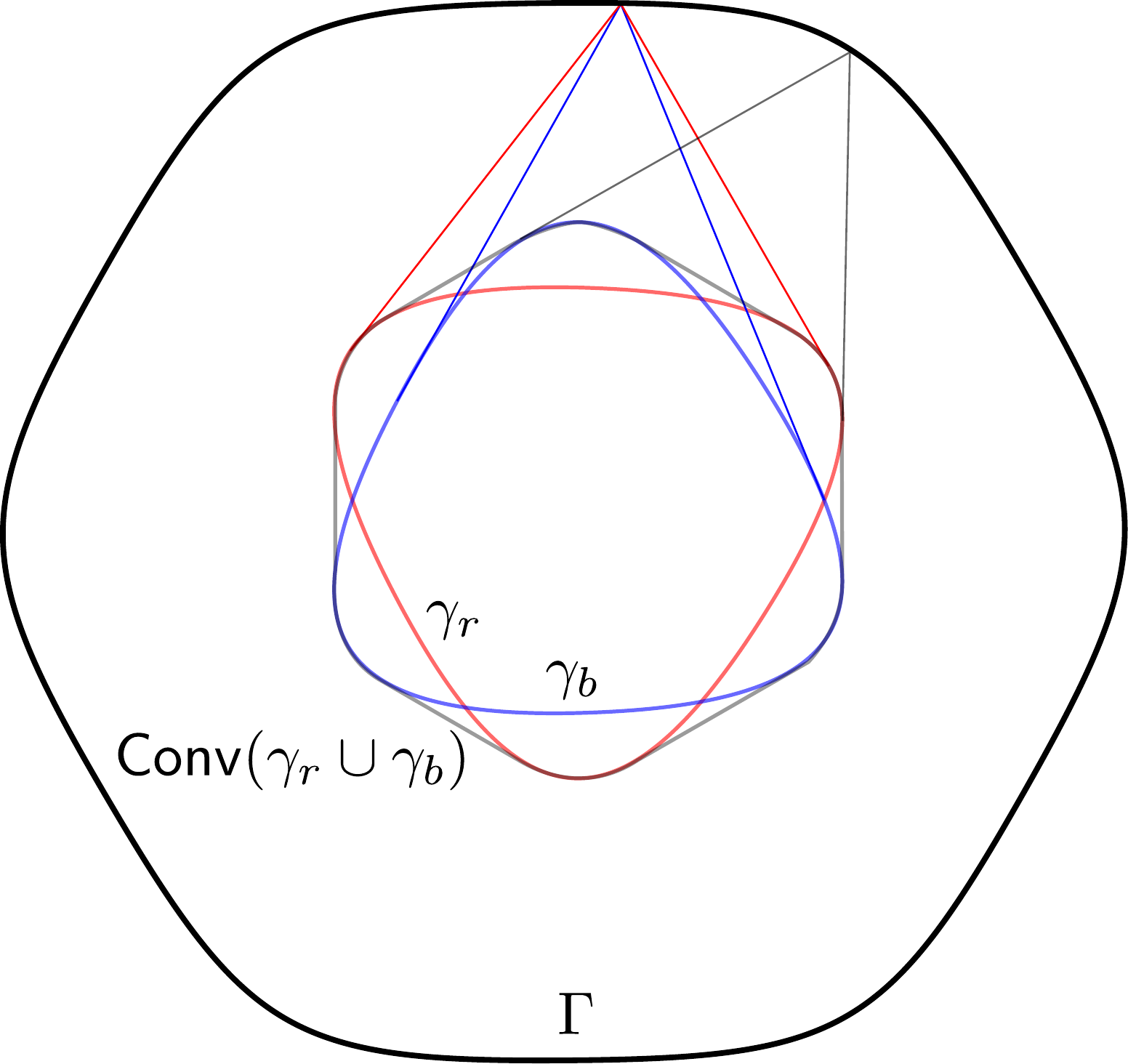}
	\caption{Convex hull of two intersecting caustics is also a caustic. 
	}\label{fig:magendavid}\end{figure} 

In such a case $\gamma_1$ and $\gamma_2$ must have the same rotation 
number since there is a line tangent to both of the caustics.
Moreover it is obvious that $\bar\gamma_1$ and $\bar\gamma_2$ 
cannot be disjoint. So the question is if it is possible that two convex 
caustics
have non-trivial intersection. In such a case also their convex hull is also a 
caustic. One can strengthen the question:

\begin{qst}
	 Is it possible for $\Gamma$ which is symmetric with 
respect to certain axis to have a convex caustic $C$ which is not symmetric 
with respect to this axis. 
\end{qst}

For example one could imagine two caustics forming 
rounded  David Star (Fig \ref{fig:magendavid}). Let us remark that the 
answer to the quantum analog of this question is 
positive: for symmetric domain Dirichlet eigenfunction can be 
non-symmetric. We could not however decide 
if such a counterexample would be possible in the original setting.

\begin{qst}
	How irregular a convex caustic can be versus regular boundary curve 
$\Gamma$?
\end{qst}

\begin{qst}
	Let  $\Gamma$ be a billiard table different from circle having a convex caustic 
$\gamma$. For every point $P\in\Gamma$ denote $P_-, P_+$ the points on the caustic $\gamma$
which are tangency points on the tangent lines to $\gamma$ passing through $P$.
Is it possible that the length of the arc of $\gamma$ between $P_-, P_+$ 
is
constant not depending on $P$?
\end{qst}

	\bibliographystyle{plain}
		\bibliography{switched}
		
		

\end{document}